\newtheorem{theorem}{Theorem}[section]
\newtheorem{lemma}[theorem]{Lemma}
\theoremstyle{remark}
\def\bb #1{ {\mathbb #1} }
\def\c #1{ {\mathcal #1} }
\def\f #1{ {\mathfrak #1} }
\def\b #1{ {\bf #1} }
\newenvironment{psmallmatrix}
  {\left(\begin{smallmatrix}}
  {\end{smallmatrix}\right)}
\begin{document}
\title{$p$-adic variation of unit root $L$-functions}

\author{C. Douglas Haessig \and Steven Sperber}

\date{\today}
\maketitle

\begin{abstract}
Dwork's conjecture, now proven by Wan \cite{Wan-Higherrankcase-2000, Wan-Rankonecase-2000, Wan-DworkConjectureunit-1999}, states that unit root $L$-functions ``coming from geometry'' are $p$-adic meromorphic. In this paper we study the $p$-adic variation of a family of unit root $L$-functions coming from a suitable family of toric exponential sums. In this setting, we find that the unit root $L$-functions each have a unique $p$-adic unit root. We then study the variation of this unit root over the family of unit root $L$-functions. Surprisingly, we find that this unit root behaves similarly to the classical case of families of exponential sums, as studied in \cite{MR2966711}. That is, the unit root is essentially a ratio of $\c A$-hypergeometric functions.
\end{abstract}


\section{Introduction}

Dwork conjectured \cite{Dwork-NormalizedPeriodsII} that certain $L$-functions, constructed as Euler products of $p$-adic unit roots coming from  the fibers of an algebraic family of $L$-functions, are $p$-adic meromorphic. He proved this in a few cases using the idea of an excellent lifting of Frobenius, but was unable to prove it in general, mainly because excellent lifting in its original form does not always exist. In a series of papers \cite{Wan-Higherrankcase-2000, Wan-Rankonecase-2000, Wan-DworkConjectureunit-1999}, Wan proved Dwork's conjecture using a new technique which avoided excellent lifting. In this paper, we use Wan's techniques, as established in \cite{MR3249829}, to study the $p$-adic variation of unit root $L$-functions.

To solidify concepts, we first consider an example of a unit root $L$-function coming from a family of toric exponential sums. Let $\Psi$ be a nontrivial additive character on $\bb F_q$. Let $f \in \bb F_q[\lambda_1^\pm, \ldots, \lambda_s^\pm, x_1^\pm, \ldots, x_n^\pm]$ be a Laurent polynomial, and consider for each $\bar \lambda \in (\overline{\bb F}_q^\times)^s$ and $m \geq 1$, the exponential sum
\[
S_m(f, \bar \lambda) := \sum_{\bar x \in (\bb F_{q^{m \cdot deg(\bar \lambda)}}^\times)^n} \Psi\circ Tr_{\bb F_{q^{m \cdot deg(\bar \lambda)}} / \bb F_q}(f(\bar \lambda, \bar x)).
\]
Define by $L(f, \bar \lambda, T) := \exp( \sum_{m \geq 1} S_m(f, \bar \lambda) \frac{T^m}{m})$ the associated $L$-function. It is known that $L(f, \bar \lambda, T)^{(-1)^{n+1}}$ is a rational function with a unique $p$-adic unit root, say $\pi_0(\bar \lambda)$, which is also a 1-unit. The unit root $L$-function of this family is defined by
\[
L_\text{unit}(\kappa, T) := \prod_{\bar \lambda \in |\bb G_m^s / \bb F_q |} \frac{1}{1 - \pi_0(\bar \lambda)^\kappa T^{deg(\bar \lambda)}},
\]
where $\kappa$ takes on values in the $p$-adic integers $\bb Z_p$. As mentioned above, in this paper we study the $p$-adic variation of unit root $L$-functions such as these. The following setup is similar to that of the above family, but more technical for the following reason. As unit root $L$-functions come from families, and we wish to study a family of unit root $L$-functions, we need to consider a family of families. The role of the variables in the following is: $x$ denotes the space variables, $\lambda$ denotes the parameters of the family, and $t$ denotes the parameters defining the family of families.

Let $\c {A}$ be a finite subset of $\bb {Z}^n$.  We define the Newton polyhedron of $\c {A}$ at $\infty$, denoted $\Delta_{\infty}(\c {A})$, to be the convex closure of $\c {A} \cup {0}$ in $\bb {R}^n$. We make the simplifying hypothesis that every element $u \in \c {A}$ lies on the Newton boundary at $\infty$  of $\Delta_{\infty}(\c {A})$, that is, the union of all faces of $\Delta_{\infty}(\c {A})$ which do not contain the origin. In other language this is the same as the hypothesis that $w(u) = 1$ for all $u \in \c {A}$  where $w$ is the usual polyhedral weight defined by $\Delta_{\infty}(\c {A})$ (see the next section for definition). The generic polynomial $f$ with  $x$-support equal to $\c {A}$ is given by $f(t, x) = \sum t_u x^u  \in \bb F_q[\{ t_u\}_{u \in \c{A}}, x_1^\pm, \ldots, x_n^\pm]$ where $u$ runs over $ \c {A}$ and $\{t_u\}_{u \in \c {A}}$ are  new variables. Let  $\Delta_\infty(f) (= \Delta_{\infty}(\c {A}))$ be the Newton polyhedron at infinity of $f$.  Let $P(\lambda, x) \in \bb F_q[\lambda_1^\pm, \ldots, \lambda_s^\pm, x_1^\pm, \ldots, x_n^\pm]$ be such that the monomials $\lambda^{\gamma}x^v$ in the support of $P(\lambda, x)$ all satisfy $0 < w(v) < 1$.  Such deformations were studied in \cite{MR3239170}. It is convenient to assume the origin is not in the set  $\c {A}$ and if  $\lambda^{\gamma}x^v$ is in the support of $P$, then $v \neq 0$ so that neither $f$ nor $P$ have a constant term (with respect to the $x$-variables). This assumption will be made throughout this work. Let $G(t,\lambda, x):=f(t,x) + P(\lambda,x)$.

We construct a family of $L$-functions as follows. Let $\bar t \in (\overline{\bb F}_q^*)^{|\c {A}|}$, and denote by $deg(\bar t) = [\bb F_q(\bar t) : \bb F_q]$ the degree of $\bar t$, where $\bb F_q(\bar t)$ means we adjoin every coordinate of $\bar t$ to $\bb F_q$. We will often write $d(\bar t)$ for $deg(\bar t)$. For convenience, write $q_{\bar t} := q^{d(\bar t)}$ so that $\bb F_{q_{\bar t}} = \bb F_q(\bar t)$. Next, let $\bar \lambda \in ({\overline{\bb F}_q}^*)^{s}$. Denote by $deg_{\bar t}(\bar \lambda)$ or $d_{\bar t}(\bar \lambda)$ the degree $[\bb F_{q_{\bar t}}(\bar \lambda) : \bb F_{q_{\bar t}}]$; set $q_{\bar t,\bar \lambda} := q_{\bar t}^{d_{\bar t}(\bar \lambda)}$ and  $\bb F_{q_{\bar t, \bar \lambda}} = \bb F_{q_{\bar t}}(\bar \lambda)$. For each $m \geq 1$, define the exponential sum
\[
S_m(\bar t, \bar \lambda) := \sum_{\bar x \in (\bb F_{{q^m_{\bar t, \bar \lambda}}}^*)^n}  \Psi \circ Tr_{\bb F_{q_{\bar t, \bar \lambda}^m} / \bb F_q}( G(\bar t, \bar \lambda, \bar x))
\]
and its associated $L$-function
\[
L(\bar t, \bar \lambda, T) := \exp\left( \sum_{m=1}^\infty S_m(\bar t, \bar \lambda) \frac{T^m}{m} \right).
\]
It is well-known \cite{MR2966711} that $L(\bar t, \bar \lambda, T)^{(-1)^{n+1}}$ has a unique reciprocal $p$-adic unit root $\pi_0(\bar t, \bar \lambda)$, which is a 1-unit. Let $\kappa \in \bb {Z}_p$ be a $p$-adic integer.  For each $\bar t$, the unit root $L$-function is defined by
\[
L_{unit}(\kappa, \bar t, T) := \prod_{\bar \lambda \in | \bb G_m^{s} / \bb F_{q_{\bar t}}|} \frac{1}{1 - \pi_0(\bar t, \bar \lambda)^\kappa T^{d_{\bar t}(\bar \lambda)}},
\]
where $\kappa$ takes values in the $p$-adic integers $\bb Z_p$. Wan's theorem tells us that this $L$-function is $p$-adic meromorphic and so may be written as a quotient of $p$-adic entire functions:
\[
L_{unit}(\kappa, \bar t, T)^{(-1)^{s+1}} = \frac{\prod_{i=1}^\infty (1 - \alpha_i(\kappa, \bar t) T)}{\prod_{j=1}^\infty (1 - \beta_j(\kappa, \bar t) T)}, \qquad \alpha_i \rightarrow 0, \beta_j \rightarrow 0 \text{ as } i, j \rightarrow \infty.
\]
Very little is known about the zeros and poles of unit root $L$-functions. In Theorem \ref{T:  Main Thm} below, we show that for each $\bar t$ and $\kappa$, $L_{unit}(\kappa, \bar t, T)^{(-1)^{s+1}}$ itself has a unique unit zero (and no unit poles), which is a 1-unit. We then study the variation of this unit root as a function of $\bar t$ and $\kappa$. We note that the variation of the unit root $L$-function with respect to the parameter $\kappa$ has been studied before in Wan's proof of Dwork's conjecture, and is connected to the Gouv\^ea-Mazur conjecture \cite{MR1122070}. On the other hand, as far as we know, the study of the $p$-adic analytic variation of the unit root $L$-function with respect to $\bar t$ is new. To state the main result, first denote by $\pi \in \overline{\bb Q}_p$ an element satisfying $\pi^{p-1} = -p$. Next, writing $G(t, \lambda, x) = f(t, x) + P(\lambda, x) = \sum t_u x^u + \sum A(\gamma, v) \lambda^\gamma x^v \in \bb F_q[x_1^\pm, \ldots, x_n^\pm, \lambda_1^\pm, \ldots, \lambda_s^\pm, \{ t_u \}_{u \in Supp(f)}]$, let  $\hat A(\gamma, v)$ be the Teichm\"uller lift of $A(\gamma, v)$ in $\bb Q_q$ for each $(\gamma, v) \in Supp(P)$. We now replace every coefficient of $A(\gamma, v)$ or $P(\lambda, x)$ with a new variable $\Lambda$: set $\c P(\Lambda, \lambda. x) := \sum_{(\gamma, v) \in Supp(P)} \Lambda_{\gamma, v} \lambda^\gamma x^v$ and
\[
H(t, \Lambda, \lambda, x) := f(t, x)  +  \c P(\Lambda, \lambda, x).
\]
Note that the series 
\[
\exp \pi H(t, \Lambda, \lambda, x) ) = \sum_{\gamma \in \bb Z^s, u \in \bb Z^n} K_{\gamma, u}(t, \Lambda) \lambda^{\gamma} x^{u}
\]
is well-defined, and its coefficients $K_{\gamma, u}(t, \Lambda)$ are themselves elements in the power-series ring  $\bb Z_p[\zeta_p][[\{ t_u \}_{u \in \c A}, \{ \Lambda_{\gamma, v} \}_{(\gamma, v) \in Supp(P)}]]$, and so converge in the open polydisk $D(0,1^-)^{|\c A| + |Supp(P)|}$ defined by the inequalities $|t_u|_p < 1$ for all $u \in \c A$ and $| \Lambda_{\gamma, v} | < 1$ for all $(\gamma, v) \in Supp(P)$. Of particular interest is $K_{0,0}(t)$, a principal $p$-adic unit for all $t$ and $\Lambda$ in the polydisk. Define $\c F(t, \Lambda) := K_{0,0}(t, \Lambda) / K_{0,0}(t^p, \Lambda^p)$ and set $\c F_m(t, \Lambda) := \prod_{i=0}^{m-1}  \c F(t^{p^i}, \Lambda^{p^i})$.

\begin{theorem}\label{T: Main Thm}
Let $\hat t$ be the Teichm\"uller lift of $\bar t$. The function $\c F(t, \Lambda)$ analytically continues to the closed polydisc $D(0,1^+)^{|\c A| + |Supp(P)|}$ defined by  $|t_u|_p \leq 1$, $u \in \c A$ and $|\Lambda_{\gamma, v}| < 1$, $(\gamma, v) \in Supp(P)$. Furthermore, $\c F_{a d(\bar t)}(\hat t, \hat A)^\kappa = \prod_{i=0}^{a d(\bar t)} \c F(\hat t^{p^i}, \hat A^{p^i})^\kappa$ is the unique unit root of $L_{unit}(\kappa, \bar t, T) ^{(-1)^{s+1}}$ at each fiber $\bar t$ and $\kappa \in \bb Z_p$, where $\c F_{a d(\bar t)}(\hat t, \hat A)$ means setting each $t_u = \hat t_u$ and $\Lambda_{\gamma, v} = \hat A(\gamma, v)$.
\end{theorem}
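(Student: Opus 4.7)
The plan is to apply the Dwork-Wan framework iteratively: once in the $x$-direction, producing a rank-one unit-root $F$-crystal over $\bb G_m^s / \bb F_{q_{\bar t}}$ that carries the eigenvalues $\pi_0(\bar t, \bar \lambda)$, and a second time in the $\lambda$-direction, producing a Frobenius operator $\beta_\kappa(\hat t)$ whose Fredholm characteristic series is $L_{unit}(\kappa, \bar t, T)^{(-1)^{s+1}}$. A Newton polygon argument then isolates the unique unit reciprocal root, and bookkeeping of constant terms identifies it as $\c F_{ad(\bar t)}(\hat t, \hat A)^\kappa$. Prior to the main argument I would establish the analytic continuation of $\c F$: although $\exp(\pi H)$ converges only on the open polydisc $|t_u|<1$, the ratio $K_{0,0}(t, \Lambda)/K_{0,0}(t^p, \Lambda^p)$ extends to $|t_u|\le 1$, essentially because Dwork's splitting function $\theta(z)=\exp(\pi z - \pi z^p)$ has strictly larger radius of convergence than $\exp(\pi z)$, and the ``divergent'' boundary behavior of $K_{0,0}$ cancels cleanly against that of its Frobenius twist in the quotient.

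The first application of Dwork-Wan is by now standard: Dwork's trace formula for toric exponential sums in the $x$-variables yields a Frobenius on a $p$-adic Banach space whose unique unit eigenvalue is $\pi_0(\bar t, \bar \lambda)$, and by the analysis in \cite{MR2966711} this unit root is built explicitly from the constant term $K_{0,0}$ of $\exp(\pi H)$ after incorporating the $\hat \lambda$-dependence into the Teichm\"uller lifts. Varying $\bar \lambda$ and taking the $\kappa$-th tensor power yields a rank-one $F$-crystal on $\bb G_m^s/\bb F_{q_{\bar t}}$ with $L$-function $L_{unit}(\kappa, \bar t, T)$. The second application uses Wan's construction from \cite{MR3249829}, which produces a Frobenius operator $\beta_\kappa(\hat t)$ on a Banach space of overconvergent power series in $\lambda$ whose Fredholm characteristic series equals $L_{unit}(\kappa, \bar t, T)^{(-1)^{s+1}}$. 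Using the monomial basis $\{\lambda^\gamma\}$, the hypotheses that $w(u)=1$ for every $u \in \c A$ and that $0<w(v)<1$ for $(\gamma, v) \in Supp(P)$ force every matrix entry of $\beta_\kappa$ in a row or column indexed by $\gamma \ne 0$ to lie in the maximal ideal, while the $(0,0)$-entry --- being built out of the constant coefficient $K_{0,0}$ of $\exp(\pi H)$ across the $ad(\bar t)$ iterations needed to pass from $p$-Frobenius to $q_{\bar t}$-Frobenius --- is exactly $\c F_{ad(\bar t)}(\hat t, \hat A)^\kappa$.

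With $\beta_\kappa$ block-decomposed as $\begin{psmallmatrix} u & v \\ w & M \end{psmallmatrix}$, where $u$ is the $(0,0)$-entry and all entries of $v, w, M$ lie in the maximal ideal, the Newton polygon of $\det(1-\beta_\kappa T)$ has exactly one slope-zero segment of length one, giving the unique unit reciprocal zero $u = \c F_{ad(\bar t)}(\hat t, \hat A)^\kappa$; the Fredholm determinant of $M$ has only positive slopes and contributes no unit reciprocal poles. The main obstacle is the explicit identification in the second application: verifying that the $(0,0)$-entry of $\beta_\kappa(\hat t)$ is precisely $\c F_{ad(\bar t)}(\hat t, \hat A)^\kappa$ with no extra correction factors. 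This requires careful bookkeeping of how the $\kappa$-th tensor power of the rank-one unit-root crystal interacts with Wan's overconvergent trace formula, and of how the $ad(\bar t)$ iterations of $p$-Frobenius each contribute a factor $\c F(\hat t^{p^i}, \hat A^{p^i})$ to the final product. A secondary technical point is ensuring, in the analytic continuation step, that the limiting values of $\c F$ at Teichm\"uller points lie in $1 + p\bb Z_p[\zeta_p]$, which is required for $\c F^\kappa$ to make sense for every $\kappa \in \bb Z_p$.
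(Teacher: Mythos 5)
Your outline has two genuine gaps, both at the heart of the theorem. First, the claim that a ``second application'' of the Dwork--Wan machinery in the $\lambda$-direction produces a single Frobenius operator $\beta_\kappa(\hat t)$ on a Banach space of overconvergent series in $\lambda$ whose Fredholm characteristic series equals $L_{unit}(\kappa,\bar t,T)^{(-1)^{s+1}}$ is not available: the rank-one unit-root crystal $\bar\lambda \mapsto \pi_0(\bar t,\bar\lambda)^\kappa$ is precisely \emph{not} overconvergent, which is why Dwork's conjecture was hard and why Wan's method is needed at all. What one actually has is the decomposition (\ref{E: Adams}), $L_{unit}(\kappa,\bar t,T)=\prod_{i\geq 0}L^{(i)}(\kappa,\bar t,T)^{(-1)^{i-1}(i-1)}$, where only $L^{(0)}$ is controlled by a completely continuous operator: one must build the infinite symmetric power space $\c S_0(\c O_0)$, the operator $\beta_{\kappa,\bar t}=\psi_\lambda^{ad(\bar t)}\circ[\alpha_{ad(\bar t)}]_\kappa$, obtain $L^{(0)}(\kappa,\bar t,T)^{(-1)^{s+1}}=\det(1-\beta_{\kappa,\bar t}T)^{\delta^s_{q_{\bar t}}}$ (note the $\delta^s$ operation, so even here the $L$-function is not itself a Fredholm determinant), and separately show the factors $L^{(m)}$, $m\geq 2$, have no unit zeros or poles because of the wedge factor. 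Your proposal skips this decomposition entirely, and without it the Newton-polygon step has no operator to apply to.

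Second, and more fundamentally, your identification of the unit root is unsound. For a nuclear matrix of block shape $\begin{psmallmatrix} u & v\\ w & M\end{psmallmatrix}$ with $v,w,M$ in the maximal ideal, the Newton polygon gives a \emph{unique} unit root, but that root is only congruent to $u$ modulo the maximal ideal: Schur-complement-type corrections coming from $v$, $w$, $M$ enter at higher order, so ``the unit zero equals the $(0,0)$-entry'' is false in general. Moreover the $(0,0)$-entry of $\beta_{\kappa,\bar t}$ in the monomial basis is not $\c F_{ad(\bar t)}(\hat t,\hat A)^\kappa$ anyway; no amount of bookkeeping of matrix entries will produce the exact value, which is why the paper does something different. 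It constructs a dual operator $\beta^*_{\kappa,\bar t}=\mathrm{pr}_{M(\Gamma)}\circ[\alpha^*_{ad(\bar t)}]_\kappa\circ\Phi_\lambda^{ad(\bar t)}$, proves $\det(1-\beta_{\kappa,\bar t}T)=\det(1-\beta^*_{\kappa,\bar t}T)$ by an adjointness pairing together with finite symmetric power approximations $k_m\to\kappa$, and then exhibits an \emph{exact eigenvector}: with $\eta=\mathrm{pr}_0(\exp\pi H)/J_{0,0}$, the element $\Upsilon(\eta_{\mathrm{sp}})^\kappa$ satisfies $\beta^*_{\kappa,\bar t}\Upsilon(\eta_{\mathrm{sp}})^\kappa=\c F_{ad(\bar t)}(\hat t,\hat A)^\kappa\,\Upsilon(\eta_{\mathrm{sp}})^\kappa$, the projection $\mathrm{pr}_{M(\Gamma)}$ killing the error term $\omega$ supported on $M(\Gamma)\setminus M_0(\Gamma)$. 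Since the eigenvalue is a $1$-unit and the unit root is unique, equality follows. Your mod-$\pi$ block argument does establish uniqueness (this is essentially the paper's Theorem \ref{T: unique unit}), but the exact formula requires the eigenvector mechanism, which is absent from your proposal. Finally, the analytic continuation of $\c F$ is asserted by a heuristic about $\theta$ having a larger radius of convergence; in the paper this is not reproved but imported from \cite{MR2966711}, where the convergence of $J_{0,0}(\Lambda)/J_{0,0}(\Lambda^p)$ on the closed polydisk is established, so at minimum you should cite or reprove that statement rather than rely on a cancellation claim.
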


\medskip\noindent {\bf Remark.} It is worthwhile to compare this result to the result in \cite{MR2966711}. To that end, consider the (total) family $H(t, \Lambda, \lambda, x)$ above. For each $\bar t \in (\overline{\bb F}_q^\times)^{|\c A|}$ and $m \geq 1$, define the exponential sum
\[
S_m(H, \bar t) := \sum_{(\bar \lambda, \bar x) \in (\bb F_{q^{m \cdot deg(\bar t)}}^\times)^s \times (\bb F_{q^{m \cdot deg(\bar t)}}^\times)^n} \Psi\circ Tr_{\bb F_{q^{m \cdot deg(\bar t)}} / \bb F_q}(H(\bar t, A, \bar \lambda, \bar x)).
\]
Define by $L(H, \bar t, T) := \exp( \sum_{m \geq 1} S_m(f, \bar \lambda) \frac{T^m}{m})$ the associated $L$-function, a rational function over $\bb Q(\zeta_p)$. By \cite{MR2966711}, $L(H, \bar t, T)^{(-1)^{s+n+1}}$ has a unique $p$-adic unit root given by $\c F_{a d(\bar t)}(\hat t, \hat A)$. Conjecturally this type of relation should hold in greater generality.

\medskip\noindent {\bf Remark.} The existence of a unique $p$-adic unit root is a general result for unit root $L$-functions defined over the torus $\bb G_m^s$. This includes the classical case of $L$-functions over of exponential sums defined over the torus; see \cite[Section 3]{MR3249829} for details. 

To state this result, we use the language of $\sigma$-modules. See \cite{MR3249829} reference to the following notation.  Let $K$ be a finite extension field of $\bb Q_p$ with uniformizer $\pi$, ring of integers $R$, and residue field $\bb F_q$. Let $(M, \phi)$ be a $c \cdot \log$-convergent, nuclear $\sigma$-module over $R$, ordinary at slope zero of rank one ($h_0 = 1$) with basis $\{ e_i \}_{i \geq 0}$. Assume further the \emph{normalization condition} $\phi e_0 \equiv e_0 \text{ mod}(\pi)$ and $\phi e_i \equiv 0 \text{ mod}(\pi) \text{ for all } i \geq 1$. With this setup, it follows that the associated unit root $L$-function $L_\text{unit}(\kappa, \phi, T)^{(-1)^{s+1}}$ has a unique $p$-adic unit root (and no unit poles). To see this, we first note that by \cite[Lemma 2.1]{MR3249829} and \cite[equation (9)]{MR3249829}, $L_\text{unit}(\kappa, \phi, T)^{(-1)^{s+1}} \equiv det(1 - F_{B^{[\kappa]}} T)$ mod $\pi$. Next, it follows from the normalization condition that the matrix $B^{[\kappa]}$ takes the form $\begin{psmallmatrix}1 & 0\\0 &0 \end{psmallmatrix}$ mod $\pi$, and thus $det(1 - F_{B^{[\kappa]}} T) \equiv 1 - T$ mod $\pi$. Hence, the Fredholm determinant $det(1 - F_{B^{[\kappa]}} T)$ has a unique $p$-adic unit root proving the results.

\section{Lower deformation family}\label{S: unit root formula}

Let $f \in \bb F_q[\{ t_u\}_{u \in Supp(f)}, x_1^\pm, \ldots, x_n^\pm]$ be of the form $f(t, x) = \sum t_u x^u$. In particular, the coefficient of every monomial $x^u$ in $f$ is a new variable $t_u$. Denote by  $\Delta_\infty(f)$ the Newton polytope at infinity of $f$, defined as the convex closure of $Supp(f) \cup \{0\}$ in $\bb R^n$. Let $Cone(f)$ be the union of all rays emanating from the origin and passing through $\Delta_\infty(f)$, and set $M := M(f) := Cone(f) \cap \bb Z^n$. We define a weight function $w$ on $M$ as follows. For $u \in M$, let $w(u)$ be the smallest non-negative rational number such that $u \in w(u) \Delta(f)$. It is convenient to assume $w(u) = 1$ for all $u$ in the $x$-support of $f$. In particular this implies that  $f$ has no constant term. Let $D$ denote the smallest positive integer such that $w(M) \subset (1/D)\bb Z_{\geq 0}$. The weight function $w$ satisfies the following norm-like properties:
\begin{enumerate}
\item $w(u) = 0 $ if and only if $u = 0$.
\item $w(c u) = c w(u)$ for every $c \geq 0$.
\item $w(u + v) \leq w(u) + w(v)$ for every $u, v \in M$, with equality holding if and only if $u$ and $v$ are cofacial.
\end{enumerate}
 It is also convenient to assume the lower-order deformation  $P \in \bb F_q[\lambda_1^\pm, \ldots, \lambda_s^\pm, x_1^\pm, \ldots, x_n^\pm]$ has no constant term so the origin in $\bb {R}^n$ is not in the $x$-support of $P$. In fact, if we write  $P(\lambda, x) = \sum_{u \in M} P_u(\lambda) x^u$, then $0 <w(u) < 1$. Our lower deformation family then is defined by $G(t, \lambda, x) := f(t, x) + P(\lambda, x)$.  Set
\begin{equation}\label{E: RelPolyDef}
U := \left\{ \left( \frac{1}{1 - w(u)} \right) \gamma \in \bb Q^s \mid  (\gamma, u) \in Supp(P) \right\},
\end{equation}
and let $\Gamma := \Delta_\infty(U) \subset \bb R^s$. In a similar way to the above, define $M(\Gamma) := Cone(\Gamma) \cap \bb Z^s$ with associated polyhedral weight function $w_\Gamma$. Observe that for $\delta = \left( \frac{1}{1 - w(u)} \right) \gamma \in U$ that $w_\Gamma(\delta) < 1$.  We call $\Gamma$ the \emph{relative polytope} of the family $G(x, t)$.

\bigskip\noindent{\bf Rings of $p$-adic analytic functions.}  Let $\zeta_p$ be a primitive $p$-th root of unity. Let $\bb Q_q$ be the unramified extension of $\bb Q_p$ of degree $a := [\bb F_q : \bb F_p]$, and denote by $\bb Z_q$ its ring of integers. Then $\bb Z_q[\zeta_p]$ and $\bb Z_p[\zeta_p]$ are the ring of integers of $\bb Q_q(\zeta_p)$ and $\bb Q_p(\zeta_p)$, respectively. Let $\pi \in \overline{\bb Q}_p$  satisfy $\pi^{p-1} = -p$, and let $\tilde \pi$ be an element which satisfies $ord_p(\tilde \pi) = (p-1) / p^2$. We may have occasion to work over a purely ramified extension $\Omega_0 = \bb Q_p(\hat \pi)$  of $\bb Q_p$ with uniformizer $\hat \pi$ which contains $\bb Q_p(\zeta_p, \tilde \pi)$ and for which $\tilde \pi$ is an integral power of $\hat \pi.$ Let $\Omega = \bb Q_q(\hat \pi).$ Denote by $R$ the ring of integers  of $\Omega$, and $R_0$ the ring of integers of $\Omega_0$. Set
\[
\c O_0 := \left\{ \sum_{\gamma \in M(\Gamma)} C(\gamma) \tilde \pi^{w_\Gamma(\gamma)} \lambda^\gamma  \mid C(\gamma) \in R, C(\gamma) \rightarrow 0 \text{ as } \gamma \rightarrow \infty \right\}.
\]
(We note that the fractional powers of $\tilde \pi$ are to be understood as integral powers of a uniformizer of $R$.) Then $\c O_0$ is a ring with a discrete valuation given by
\[
\left| \sum_{\gamma \in M(\Gamma)} C(\gamma) \lambda^\gamma \tilde \pi^{w_\Gamma(\gamma)} \right| := \sup_{\gamma \in M(\Gamma)} | C(\gamma) |.
\]
Define
\[
\c C_0(\c O_0) := \left\{\xi = \sum_{\mu \in M(\bar f)} \xi(\mu) \tilde \pi^{w(\mu)} x^\mu \mid \xi(\mu) \in \c O_0, \xi(\mu) \rightarrow 0 \text{ as } \mu \rightarrow \infty \right\},
\]
an $\c O_0$-algebra. 

In the following, $q = p^a$ is an arbitrary power of $p$ (including the case when $a = 0$), so we can handle the cases of $t^q$, $t^p$, and $t$, at the same time. Define
\begin{equation}\label{E: 32}
\c O_{0, q} := \left\{ \sum_{\gamma \in M(\Gamma)} C(\gamma) \lambda^{\gamma} \tilde \pi^{w_{q\Gamma}(\gamma)} \mid C(\gamma) \in R, C(\gamma) \rightarrow 0 \text{ as } \gamma \rightarrow \infty \right\}.
\end{equation}
This ring is the same as $\c O_0$ except using a weight function defined by the dilation $q \Gamma$ (that is, $w_{q \Gamma}(\gamma) = w_\Gamma(\gamma) / q$). We note that here $\c O_{0, 1} = \c O_0$. A discrete valuation may be defined as follows. If $\xi = \sum_{\gamma \in M(\Gamma)} C(\gamma) \tilde \pi^{w_{q\Gamma}(\gamma)} \lambda^\gamma \in \c O_{0,q}$ then the valuation on $\c O_{0,q}$ is given by
\[
| \xi | := \sup_{\gamma \in M(\Gamma)} |C(\gamma)|.
\]
We may also define the space
\begin{equation}\label{E: 33}
\c C_0(\c O_{0,q}) := \left\{ \sum_{u \in M(f)} \xi_u x^u \tilde \pi^{w(u)} \mid \xi_u \in \c O_{0, q},  \xi_u \rightarrow 0 \text{ as } u \rightarrow \infty \right\}.
\end{equation}
For $\eta = \sum_{u \in M(\bar f)} \xi_u \tilde \pi^{w(u)} x^u \in \c C_0 (\c O_{0,q})$, we set
\[
|\eta| = \sup_{u \in M(f)} |\xi_u|.
\]

\bigskip\noindent{\bf Frobenius.} At present, we fix $\bar t \in (\overline{\bb F}_q)^{|A|}$, returning to variation in $\bar t$ in the last section.  Recall the notation $d({\bar t})= [\bb {F}_q({\bar t}):\bb{F}_q]$, and $q_{\bar t} = q^{d({\bar t})} $. Now let $\bar \lambda \in (\overline{\bb F}_q)^s$.  Recall we denote by $deg(\bar t)$ or $d(\bar t)$ the degree $[\bb F_q(\bar t) : \bb F_q]$. Similarly, denote by $deg(\bar \lambda)$ or $d(\bar {\lambda})$ the degree $[\bb F_q(\bar \lambda,\bar t): \bb F_q(\bar t)]$, and $q_{\bar t, \bar {\lambda}} = q^{d(\bar t)d(\bar {\lambda})}$ .

Dwork defines a splitting function by $\theta(T) := \exp \pi (T - T^p) = \sum_{i = 0}^\infty \theta_i T^i$. It is well-known that $ord_p(\theta_i) \geq \frac{(p-1)}{p^2} i$ for all $i \geq 0$. Writing
\begin{align*}
G(\bar t, \lambda, x) &= f(\bar t, x) + P(\lambda, x) \\
&= \sum \bar t_u x^u + \sum \bar A(\gamma, v) \lambda^\gamma x^v \in \bb F_{q_{\bar t}}[x_1^\pm, \ldots, x_n^\pm, \lambda_1^\pm, \ldots, \lambda_s^\pm],
\end{align*}
we let 
\[
\hat G(\hat t, \lambda, x) := \sum \hat t_u x^u + \sum \hat {A}(\gamma, v) \lambda^\gamma x^v \in R[x_1^\pm, \ldots, x_n^\pm, \lambda_1^\pm, \ldots, \lambda_s^\pm]
\]
be the lifting of $G$ by lifting the coefficients $\bar A(\gamma, u)$ and $\bar t$ by Teichm\"uller units. Set 
\begin{equation}\label{E: 38a}
 F(\hat t, \lambda, x) := \prod_{u \in Supp(f)} \theta(\hat t_u x^u) \cdot \prod_{(\gamma, v) \in Supp(P)} \theta( \hat {A}(\gamma, v) \lambda^\gamma x^v)
\end{equation}
and for any $m \geq 1$,
\begin{equation}\label{E: 38b}
 F_m(\hat t, \lambda, x) := \prod_{i=0}^{m-1}  F^{\sigma^i}(\hat t, \lambda^{p^i}, x^{p^i}),
\end{equation}
where $\sigma$ is the extension of the usual Frobenius generator of $Gal(\bb Q_q / \bb Q_p)$ to $\Omega$ with $\sigma(\hat\pi)=\hat \pi.$ Then, $ \sigma$ acts on series with coefficients in $\Omega$ by acting on these coefficients. Note that if we set $F_m(\hat t, \lambda, x) = \sum_{u \in M(f)} \mathcal{B}^m(u)x^u = \sum_{\gamma \in M_(\Gamma), u \in M(f)} \mathcal{B}^m (\gamma, u) \lambda^{\gamma} x^u ,$ then 
\[
ord_p (\mathcal{B}^m (\gamma, u)) \geq \frac{w_{\Gamma}(\gamma) + w(u)}{p^{m-1}} \cdot \frac{p-1}{p^2}.
\]

Define $\psi_x$ by $\sum C(u) x^u \mapsto \sum C(p u) x^u$. Set
\[
\alpha_1 :=  \sigma^{-1} \circ \psi_x \circ F(\hat t, \lambda, x)
\]
A similar argument to that in \cite{MR3239170} demonstrates that $\alpha_1$ maps $\sigma^{-1}$-semilinearly $\c C_0(\c O_0)$ into $\c C_0( \c O_{0, p})$. Similarly, for $m \geq 1$, if we define
\[
\alpha_m := \sigma^{-m} \circ \psi_x^m \circ F_m(\hat t, \lambda, x),
\]
then $\alpha_m$ maps $\c C_0(\c O_0)$ into $\c C_0(\c O_{0, p^m})$. In particular, $\alpha_m(\tilde{\pi}^{w(v)} x^v) = \sum_{u \in M(f)} \tilde{\pi}^{w(v)-w(u)}\mathcal{B}^m(p^m u -v) \tilde{\pi}^{w(u)} x^u$, with $ord_p(\tilde{\pi}^{w(v)-w(u)}\mathcal{B}^m(p^m u -v)  \geq \frac{(p^m - 1)w(u) + (p^{m-1} -1)w(v)}{p^{m-1}} ord_p( \tilde{\pi})$. Summarizing, we have in $\c C_0(\c O_{0,p^m}),  |\alpha_m(\tilde{\pi}^{w(v)}x^v)| \leq |\tilde{\pi}|^{w(v) \frac{p^{m-1} -1}{p^{m-1}}}$.

\bigskip\noindent{\bf Fibers.} Define
\[
\alpha_{\bar t, \bar \lambda} := \psi_x^{a d(\bar t) d(\bar \lambda)} \circ F_{a d(\bar t) d(\bar \lambda)}(\hat t, \hat \lambda, x),
\]
where $\hat t$ and $\hat \lambda$ are the Teichm\"uller representatives of $\bar t$ and $\bar \lambda$, respectively. Notice that $\alpha_{\bar t, \bar \lambda}$ is an endomorphism of $\c C_0(\hat \lambda)$, where $\c C_0(\hat \lambda)$ denotes the space obtained from $\c C_0(\c O_0)$ by applying the map on  $\c O_0$  which sends  $\lambda \ \textup{to} \ \hat \lambda.$

To relate the $L$-function $L(\bar t, \bar \lambda, T)$ to the operator $\alpha_{\bar t, \bar \lambda}$ it is convenient to introduce the following operation: for any function $g(T)$, define $g(T)^{\delta_q} := g(T) / g(qT)$. Set $q_{\bar t, \bar \lambda} := q^{d(\bar t) d(\bar \lambda)}$.  Dwork's trace formula states
\[
(q_{\bar t, \bar \lambda}^m-1)^n Tr(\alpha_{\bar t, \bar \lambda}^m \mid \c C_0(\hat \lambda)) = \sum_{\bar x \in \left(\bb F_{q_{\bar t, \bar \lambda}^m}^*\right)^n} \Psi \circ Tr_{\bb F_{q_{\bar t, \bar \lambda}^m} / \bb F_q}( G(\bar t, \bar \lambda, \bar x))
\]
Equivalently,
\[
L(\bar t, \bar \lambda, T)^{(-1)^{n+1}} = det(1 - \alpha_{\bar t, \bar \lambda} T \mid \c C_0(\hat \lambda))^{\delta_{q_{\bar t, \bar \lambda}}^n}.
\]
This is a rational function, and it is well-known that $L(\bar t, \bar \lambda, T)^{(-1)^{n+1}}$ has a unique unit (reciprocal) root $\pi_0(\bar t, \bar \lambda)$ (see \cite{MR2966711}  for example). This unit root is a $1$-unit, so it makes sense to define, for any $p$-adic integer $\kappa$, the unit root $L$-function at the fibre $\bar t$:
\[
L_{\text{unit}}(\kappa, \bar t, T) := \prod_{\bar \lambda \in |\bb G_m^{s} / \bb F_q(\bar t)|} \frac{1}{1 - \pi_0(\bar t, \bar \lambda)^\kappa \> T^{deg(\bar \lambda)}}.
\]
Denote the roots of $det(1 - \alpha_{\bar t, \bar \lambda} T \mid \c C_0(\hat \lambda) )$ by $\pi_i(\bar t, \bar \lambda)$, and order them such that $ord_p \> \pi_i(\bar t, \bar \lambda) \leq ord_p \> \pi_{i+1}(\bar t, \bar \lambda)$ for $i \geq 0$. For each $m \geq 0$, define
\[
L^{(m)}(\kappa, \bar t, T) := \prod_{\bar \lambda \in | \bb G_m^s / \bb F_{q_{\bar t}}|} \prod (1 - \pi_0(\bar t, \bar \lambda)^{\kappa - r - m} \pi_{i_1}(\bar t, \bar \lambda) \cdots \pi_{i_r}(\bar t, \bar \lambda) \cdot \pi_{j_1}(\bar t, \bar \lambda) \cdots \pi_{j_m}(\bar t, \bar \lambda) T^{deg(\bar \lambda)})^{-1}
\]
where the inner product runs over all $r \geq 0$, $1 \leq i_1 \leq i_2 \leq \cdots$, and $0 \leq j_1 < \cdots < j_m$. Note that the factors indexed by the various $i_k$ are allowed to repeat, whereas the factors with indices $j_l$ are distinct. Intuitively, the inner product is $det(1 - Sym^{\kappa - m} \alpha_{\bar t, \bar \lambda} \otimes \wedge^m \alpha_{\bar t, \bar \lambda} T)$. From \cite[Lemma 2.1]{MR3249829},
\begin{equation}\label{E: Adams}
L_\text{unit}( \kappa, \bar t, T) = \prod_{i = 0}^\infty L^{(i)}(\kappa, \bar t, T)^{(-1)^{i-1} (i-1)}
=L^{(0)}(\kappa,\bar t, T) \prod_{i \geq 2}L^{(i)}(\kappa, \bar t, T)^{(-1)^{i-1}(i-1)}.
\end{equation}
In the next section, we will show each $L^{(i)}$ with $i \geq 1$ has no unit root or pole, whereas $L^{(0)}$ will. This will show $L_\text{unit}(\kappa, \bar t, T)^{(-1)^{s+1}}$ has a unique unit root.

\section{Infinite symmetric powers}\label{S: inf sum pow}

Denote by $\c S(\hat \lambda) := R[\hat \lambda][[ \{e_u\}_{u \in M \setminus \{0\}}]]$ the formal power series ring over $R[\hat \lambda]$ in the variables $\{e_u \}_{u \in M \setminus \{0\}}$ which are formal symbols indexed by the $M \setminus \{0\}$.  We equip this ring  with the sup-norm on coefficients (in $R[\hat \lambda]$). This ring will play the role of the formal infinite symmetric power of $\c C_0(\hat \lambda) $ over $R[\hat \lambda]$ in a way we describe below. It is convenient to write the monomials of degree $r$ in the variables $\{ e_u \}$ using the notation $e_{\b u} := e_{u_1} \cdots e_{u_r}$, where $u_1, \ldots, u_r \in M(f) \setminus \{0\}$ for  $r \geq 0$. It helps to fix ideas to assume we have a linear order on $M(f) \setminus \{0\}$ with the property that if $w(u) \leq w(v)$ for $u, v \in M(f) \setminus \{0\}$, then $u \leq v$. We may extend this to all of $M(f)$ by taking $0$ as the least element. We emphasize then in the notation $e_{\b u} := e_{u_1} \cdots e_{u_r}$ for a monomial of degree $r$ we have $0 < u_1 \leq u_2 \leq \cdots \leq u_r$, and we allow the variables to repeat. When $r = 0$ we understand there is only the monomial $1$ of degree 0. We extend the weight function $w$ to such monomials by defining, for $e_{\b u} := e_{u_1} \cdots e_{u_r}$, the weight $w(\b u) := w(u_1) + \cdots + w(u_r)$. Denote by $\c S(M)$ the set of all indices $\b u$ corresponding to monomials $e_{\b u}$. We emphasize that we will often equate elements $\b u \in S(M)$ with the monomials $e_{\b u}$; it should be clear from the context which meaning is desired. We may assume $\c S(M)$ has a linear order defined on it such that the weight $w(\b u)$ is non-decreasing and such that the restriction of this linear order to $M(f)$ is our earlier linear order. 

We may identify $\c C_0(\hat \lambda)$ as an $R[\hat \lambda]$-submodule  of $\c S(\hat \lambda)$ by defining an $R[\hat \lambda]$-linear map 
\[
\Upsilon: \c C_0(\hat \lambda) \rightarrow \c S(\hat \lambda) \qquad  \text{via} \qquad  \sum_{u \in M(f)} \xi_u \tilde \pi^{w(u)} x^u \longmapsto \xi_0 + \sum_{u \in M(f)\setminus \{0\}} \xi_u e_u.
\]
That is, the image $\Upsilon(\c C_0(\hat \lambda))$ consists of the powers series with support in the monomials of $\c S(\hat \lambda)$ of degree $\leq 1$ and with coefficients $\{\xi_u \}_{u \in M(f)} \subset R[\hat \lambda]$ satisfying $\xi_u \rightarrow 0$ as $u \rightarrow \infty$. Note that $\Upsilon(\tilde \pi^{w(u)}x^u) = e_u$ for $u \in M \setminus \{ 0 \}$, and $\Upsilon(1) := 1$. 
Define the $R[\hat \lambda]$-subalgebra of $\c S(\hat \lambda) $
\[
\c S_0(\hat \lambda) := \left\{ \xi = \sum_{\b u \in \c S(M)} \xi(\b u) e_{\b u} \mid \xi(\b u) \in R[\hat \lambda], \xi(\b u) \rightarrow 0 \text{ as } w(\b u) \rightarrow \infty \right\}.
\]
Hence, $\Upsilon(\c C_0(\hat \lambda)) \subset \c S_0(\hat \lambda)$. Note that we may write  $\alpha_{\bar t, \bar \lambda}(1) = 1 + \eta(x)$ for some element $\eta \in \c C_0(\hat \lambda)$ satisfying $|\eta| < 1$ and with support of $\eta$ in $M(f)\setminus \{ 0 \}$. For $\xi = \sum \xi(\b u) e_{\b u} \in \c S_0(\hat \lambda)$, define $| \xi | := \sum_{u \in S(M)}  | \xi(\b u) |$, which makes $\c S_0(\hat \lambda)$ a $p$-adic Banach algebra over $R[\hat \lambda]$. Then for any $\zeta \in \c C_0(\hat \lambda)$, $|\Upsilon(\zeta)| = |\zeta|$. It follows that $(\Upsilon \circ \alpha_{\bar t, \bar \lambda}( 1 ))^{\tau}$ is defined and belongs to $\c S_0(\hat \lambda)$ for any $\tau \in \bb Z_p$. Define $[\alpha_{\bar t, \bar \lambda}]_\kappa: \c S_0(\hat \lambda) \rightarrow \c S_0(\hat \lambda)$ by extending linearly over $R[\hat \lambda]$ the action on monomials of degree $r$ 
\[
[\alpha_{\bar t, \bar \lambda}]_\kappa( e_{u_1} \cdots e_{u_r} ) := (\Upsilon \circ \alpha_{\bar t, \bar \lambda} (1) )^{\kappa - r} (\Upsilon \circ \alpha_{\bar t, \bar \lambda} ( \tilde \pi^{w(u_1)} x^{u_1})) \cdots ( \Upsilon \circ \alpha_{\bar t, \bar \lambda} (\tilde \pi^{w(u_r)} x^{u_r})).
\]
By a similar argument to \cite[Corollary 2.4, part 2]{MR3249829}, 
\[
det(1 - [\alpha_{\bar t, \bar \lambda}]_\kappa T \mid \c S_0(\hat \lambda) ) = \prod_{r = 0}^\infty \prod \left(1 - \pi_0(\bar t, \bar \lambda)^{\kappa - r} \pi_{i_1}(\bar t, \bar \lambda) \cdots \pi_{i_r}(\bar t, \bar \lambda)  T \right)
\]
where the inner product runs over all multisets $\{i_1, \ldots, i_r\}$ of positive integers of cardinality $r$ satisfying $1 \leq i_1 \leq i_2 \leq \cdots$.

\bigskip\noindent{\bf Infinite symmetric power on the family.}  Denote by $\c S(\c O_0) := \c O_0[[ \{ e_u \}_{u \in M \setminus \{0\}}]]$, the formal power series ring supported by the monomials $\c S(M)$, with coefficients in the ring $\c O_0$.  As in the constant fibre case above, this ring is equipped with the sup-norm on coefficients.  Define the $p$-adic Banach algebra over $\c O_0$,
\begin{align*}
\c S_0(\c O_0) :&= \{ \xi =  \sum_{\b u \in \c S(M)} \xi(\b u) e_{\b u} \mid \xi(\b u) \in \c O_0, \xi(\b u) \rightarrow 0 \text{ as } w(\b u) \rightarrow \infty \} \\
&= \{ \xi = \sum_{\gamma \in M(\Gamma), \b u \in \c S(M)} C(\gamma, \b u) \tilde \pi^{w_{\Gamma}(\gamma)} \lambda^\gamma e_{\b u} \mid C(\gamma, \b u) \in R, C(\gamma, \b u) \rightarrow 0 \text{ as } w_\Gamma(\gamma) + w(\b u) \rightarrow \infty \},
\end{align*}
and similarly, for any $q = p^a$ an arbitrary power of $p$ (including the case when $a = 0$),
\[
\c S_0(\c O_{0, q}) := \{ \sum_{\b u \in \c S(M)} \xi(\b u) e_{\b u} \mid \xi(\b u) \in \c O_{0, q}, \xi(\b u) \rightarrow 0 \text{ as } w(\b u) \rightarrow \infty \}.
\]
Note that $\c S_0(\c O_{0, q})$ is a $p$-adic Banach algebra over $\c O_{0, q}$ with  $\c S(M)$ an orthonormal basis. 
We embed $\c C_0(\c O_{0, q}) \hookrightarrow \c S_0(\c O_{0, q})$ via a map $\Upsilon$ defined in the same way as on the fibers.  Again,  $(\Upsilon \circ \alpha_m(1))^{\tau} \in \c S_0(\c O_{0, p^m})$ for any $\tau \in \bb Z_p$. We define a map $[\alpha_m]_\kappa : \c S_0(\c O_0) \rightarrow \c S_0(\c O_{0, p^m})$ as follows. On a basis element $e_{\b u} = e_{u_1} \cdots e_{u_r}$ with $r > 0$ and $0 < u_1 \leq \cdots \leq u_r$, 
\[
[\alpha_m](e_{\b u}) := [\alpha_m]_\kappa( e_{u_1} \cdots e_{u_r} ) := (\Upsilon \circ \alpha_m( 1 ))^{\kappa - r} (\Upsilon \circ \alpha_m (\tilde \pi^{w(u_1)} x^{u_1}) ) \cdots (\Upsilon \circ \alpha_m (\tilde \pi^{w(u_r)} x^{u_r})).
\]
If $r = 0$, 
\[
[\alpha_m]_\kappa(1) := \Upsilon( \alpha_m(1))^\kappa.
\]

We may calculate an estimate for $\alpha_m( \tilde \pi^{w(u)} x^u)$, where we recall $\alpha_m := \sigma^{-m} \circ \psi_x^m \circ F_m(\bar t, \lambda, x)$. As noted earlier, we may write 
\begin{equation}\label{E: splitting}
F_m(\hat t, \lambda, x) = \sum_{\gamma \in M(\Gamma), v \in M(f)} B(\gamma, v) \tilde \pi^{( w_\Gamma(\gamma) + w(v)) / p^{m-1}} \lambda^\gamma x^v,
\end{equation}
with $ord_p \> B(\gamma, r) \geq 0$, and set $\mathcal{B}^m(\gamma,v) =  B(\gamma, v) \tilde \pi^{( w_\Gamma(\gamma) + w(v)) / p^{m-1}} $. So
\begin{align*}
\alpha_m( \tilde \pi^{w(u)} x^u) &= \psi_x^m \left( F_m(\hat t, \lambda, x) \cdot \tilde \pi^{w(u)} x^u \right) \\
&= \sum  \tilde \pi^{(w_\Gamma(\gamma) + w(p^m v - u)) / p^{m-1} + w(u) - w_\Gamma(\gamma)/p^{m-1} - w(v)} B(\gamma, p^m v - u) \cdot \tilde \pi^{w_\Gamma(\gamma) / p^{m-1}} \lambda^\gamma \cdot \tilde \pi^{w(v)} x^v.
\end{align*}
We note that
\begin{align*}
\frac{w(p^m v - u)}{p^{m-1}} + w(u) - w(v) &\geq p w(v) - \frac{w(u)}{p^{m-1}} + w(u) - w(v) \\
&\geq (p-1) w(v) + \frac{p^{m-1}-1}{p^{m-1}} w(u).
\end{align*}
Hence,
\begin{equation}\label{E: upsilon estimate}
| \Upsilon( \alpha_m( \tilde \pi^{w(u)} x^u) ) | \leq | \tilde \pi |^{ \frac{p^{m-1}-1}{p^{m-1}} w(u)}
\end{equation}

The $R$-linear map $\psi_\lambda: \c S_0(\c O_{0, p}) \rightarrow \c S_0(\c O_0)$ is defined by 
\[
\psi_\lambda: \quad \sum_{\gamma \in M(\Gamma), \b u \in \c S(M)} A(\gamma, \b u)  \lambda^\gamma e_{\b u} \longmapsto \sum_{\gamma \in M(\Gamma), \b u \in \c S(M)} A(p \gamma, \b u)  \lambda^\gamma e_{\b u}
\]
We may in the usual manner view $\c S_0(\c O_0)$ as a $p$-adic Banach space over $R$ with orthonormal basis $\{ \tilde \pi^{w_\Gamma(\gamma)} \lambda^\gamma e_{\b u} \mid \gamma \in M(\Gamma), \b u \in \c S(M) \}$. Then 
\[
\beta_{\kappa, \bar t} := \psi_\lambda^{a d(\bar t)} \circ [\alpha_{a d(\bar t)}]_\kappa: \c S_0(\c O_0) \rightarrow \c S_0(\c O_0)
\]
is a completely continuous operator (over $R$).  Set $\c B := \{ e_{\b u} \mid  \b u \in \c S(M)\}$. Let $B^{[\kappa]}_{\bar t}(\lambda)$ be the matrix of $[\alpha_{a d(\bar t)}]_\kappa$ with respect to $\c B$, the basis of $\c S_0(\c O_0)$ over $\c O_0$ (as well as $\c S_0(\c O_{0, p^m})$ over $\c O_{0, p^m}$). The entries of $B_{\bar t}^{[\kappa]}(\lambda)$ are series with support in $\c B$ and coefficients in $\c O_{0, p^m}$ (which tend to 0 as $w(\b u) \rightarrow \infty$). We may write $B^{[\kappa]}_{\bar t}(\lambda) = \sum_{\gamma \in M(\Gamma)} b_\gamma^{[\kappa]} \lambda^\gamma$, where $b_\gamma^{[\kappa]}$ is a matrix with rows and columns indexed by $M(\Gamma)$ and entries in $R$. We define the matrix $F_{B^{[\kappa]}_{\bar t}} := (b_{q_{\bar t} \gamma - \mu}^{[\kappa]} )_{(\gamma, \mu)}$ indexed by $\gamma, \mu \in M(\Gamma)$, and we set $b_{q_{\bar t} \gamma - \mu}^{[\kappa]} := 0$ if $q_{\bar t} \gamma - \mu \not\in M(\Gamma)$. Note that $F_{B^{[\kappa]}_{\bar t}}$ is a matrix with entries in $R$ whose $(\gamma, \mu)$ entry is again a matrix in $R$ with rows and columns indexed by $M(\Gamma)$. As we showed in \cite[\S 2.3]{MR3239170}, $F_{B^{[\kappa]}_{\bar t}}$ is the matrix of the completely continuous operator $\beta_{\kappa, \bar t}$, and as such it has a well-defined Fredholm determinant. In particular, the Dwork trace formula gives
\begin{align*}
(q_{\bar t}^m - 1)^s Tr( \beta_{\kappa, \bar t}^m ) &= (q_{\bar t}^m - 1)^s Tr( F_{B^{[\kappa]}_{\bar t}}^m) \\
&= \sum_{\hat \lambda^{q_{\bar t}^m} = \hat \lambda} Tr \left( B^{[\kappa]}_{\bar t}(\hat \lambda^{q_{\bar t}^{m-1}}) \cdots B^{[\kappa]}_{\bar t}(\hat \lambda^{q_{\bar t}}) B^{[\kappa]}_{\bar t}(\hat \lambda) \right) \\
&= \sum_{\substack{ \bar \lambda \in (\bb F_{q_{\bar t}^m}^*)^s \\ \hat \lambda = \text{Teich}(\bar \lambda)}}  Tr \left( [ \alpha_{\bar t, \bar \lambda} ]_{\kappa}^m \mid \c S_0(\hat \lambda) \right).
\end{align*}
Using an argument similar to that succeeding \cite[Equation 8]{MR3249829}, it follows that
\begin{equation}\label{E: A}
L^{(0)}(\kappa, \bar t, T)^{(-1)^{s+1}} = det(1 - \beta_{\kappa, \bar t} T)^{\delta_{q_{\bar t}}^{s}}.
\end{equation}
Since the Fredholm determinant $det(1 - \beta_{\kappa, \bar t} T)$ is $p$-adically entire, this demonstrates the meromorphic continuation of $L^{(0)}(\kappa, \bar t, T)$.  Since the matrix of $\beta_{\kappa, \bar t}$ shows that $det(1 - \beta_{\kappa, \bar t} T)$ has a unique unit root,  it follows  that $L^{(0)}(\kappa, \bar t, T)^{(-1)^{s+1}}$ has a unique unit root equal in fact to the unique unit root of $det(1 - \beta_{\kappa, \bar t} T).$

 In a similar way, define on the space $\c S_0(\c O_0) \otimes \wedge^m \c C_0(\c O_0)$, the operator $\beta_{\kappa, \bar t}^{(m)} := \psi_\lambda^{a d(\bar t)} \circ ([\alpha_{a d(\bar t})]_{\kappa-m} \otimes \wedge^m \alpha_{a d(\bar t)})$. Then
\[
L^{(m)}(\kappa, \bar t, T)^{(-1)^{s+1}} = det(1 - \beta_{\kappa, \bar t}^{(m)} T)^{\delta_{q_{\bar t}}^{s}}.
\]
In particular, for $m \geq 2$, due to the wedge product,  $L^{(m)}(\kappa, \bar t, T)^{(-1)^{s+1}}$ has no zeros or poles on the closed unit disk. Hence, by (\ref{E: Adams}), we have:

\begin{theorem}\label{T: unique unit}
$L_\text{unit}(\kappa, \bar t, T)^{(-1)^{s + 1}}$ has a unique $p$-adic unit root which in fact is the unique unit root of $L^{(0)}(\kappa, \bar t, T)^{(-1)^{s+1}}$.
\end{theorem}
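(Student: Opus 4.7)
The plan is to combine the Adams-type decomposition (\ref{E: Adams}) with separate analyses of the $L^{(0)}$ and $L^{(m)}$ ($m \geq 2$) factors. Since in (\ref{E: Adams}) the $L^{(1)}$ factor has exponent $(1-1) = 0$ and disappears, it suffices to show (a) $L^{(0)}(\kappa, \bar t, T)^{(-1)^{s+1}}$ has a unique unit root, and (b) each $L^{(m)}(\kappa, \bar t, T)^{(-1)^{s+1}}$ with $m \geq 2$ has neither unit zeros nor unit poles on the closed unit disk. Together (a) and (b) force $L_\text{unit}(\kappa, \bar t, T)^{(-1)^{s+1}}$ to have a single unit root, equal to that of $L^{(0)}{}^{(-1)^{s+1}}$.

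For (a), I appeal to identity (\ref{E: A}): $L^{(0)}(\kappa, \bar t, T)^{(-1)^{s+1}} = det(1 - \beta_{\kappa, \bar t} T)^{\delta_{q_{\bar t}}^s}$. The operation $g(T) \mapsto g(T)/g(q_{\bar t} T)$ supplements each reciprocal root $\pi$ of $g$ with a corresponding reciprocal pole $q_{\bar t} \pi$, which is a non-unit since $|q_{\bar t}|_p < 1$; thus $s$-fold iteration preserves the set of unit reciprocal roots and introduces no unit poles. The unit roots of $L^{(0)}{}^{(-1)^{s+1}}$ therefore coincide with those of the Fredholm determinant $det(1 - \beta_{\kappa, \bar t} T)$. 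The normalization argument sketched in the introduction applies verbatim: the matrix of $\beta_{\kappa, \bar t}$ reduces mod $\pi$ to the block form $\begin{psmallmatrix}1 & 0\\0 & 0\end{psmallmatrix}$ (because $[\alpha_{ad(\bar t)}]_\kappa(1) \equiv 1$ and $[\alpha_{ad(\bar t)}]_\kappa(e_{\mathbf{u}}) \equiv 0$ for $\mathbf{u}$ of positive degree, by the estimate (\ref{E: upsilon estimate})), whence $det(1 - \beta_{\kappa, \bar t} T) \equiv 1 - T \pmod{\pi}$ and there is exactly one unit reciprocal root.

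For (b), I compute eigenvalues fiberwise. At a closed point $(\bar t, \bar\lambda)$ the operator $[\alpha_{\bar t, \bar\lambda}]_{\kappa - m} \otimes \wedge^m \alpha_{\bar t, \bar\lambda}$ has eigenvalues of the shape $\pi_0(\bar t, \bar\lambda)^{\kappa - m - r} \pi_{i_1} \cdots \pi_{i_r} \cdot \pi_{j_1} \cdots \pi_{j_m}$ with $r \geq 0$, $1 \leq i_1 \leq \cdots \leq i_r$, and $0 \leq j_1 < j_2 < \cdots < j_m$ strictly increasing. Since $\pi_0$ is the unique unit among the fiber Frobenius roots, and at most one of the strictly distinct $j_l$ can equal $0$, at least $m-1 \geq 1$ of the factors $\pi_{j_l}$ are non-units; hence each eigenvalue is a non-unit. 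This forces $det(1 - \beta_{\kappa, \bar t}^{(m)} T)$, and consequently its $\delta_{q_{\bar t}}^s$-transform $L^{(m)}(\kappa, \bar t, T)^{(-1)^{s+1}}$, to have no unit zeros or poles.

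Combining (a), (b), and (\ref{E: Adams}) yields the theorem: the $L^{(i)}$ with $i \geq 2$ contribute only to the non-unit part, so the unique unit root of $L_\text{unit}^{(-1)^{s+1}}$ is exactly the unique unit root of $L^{(0)}{}^{(-1)^{s+1}}$. The main subtlety is bookkeeping rather than ideas: one must verify carefully that $\delta_{q_{\bar t}}^s$ neither loses nor manufactures unit roots or poles, and that the fiberwise eigenvalue description of $\beta_{\kappa, \bar t}^{(m)}$ truly governs the reciprocal roots of the global Fredholm determinant via the Dwork trace formula applied to the symmetric $\otimes$ exterior power.
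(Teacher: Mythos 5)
Your proposal is correct and follows essentially the same route as the paper: the decomposition (\ref{E: Adams}), the identity (\ref{E: A}) together with the normalized mod-$\pi$ shape of the matrix of $\beta_{\kappa,\bar t}$ to get the unique unit root of $L^{(0)}(\kappa,\bar t,T)^{(-1)^{s+1}}$, and the wedge-product (at most one unit factor among the distinct $\pi_{j_l}$) argument showing $L^{(m)}$, $m\geq 2$, contributes no unit zeros or poles. The extra details you supply (the behavior of $\delta_{q_{\bar t}}^{s}$ and the fiberwise eigenvalue count for $\beta^{(m)}_{\kappa,\bar t}$) are consistent with, and simply flesh out, the paper's argument.
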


\section{Dual theory} 

In this section, we define a dual theory for the operator $\beta_{\kappa, \bar t}$ acting on $\c S_0(\c O_0)$. We begin by defining a dual map to $\alpha_{a d(\bar t)}$. For $q = p^a$ an arbitrary power of $p$ (including the case $a = 0$) define the $\c O_{0, q}$-module
\[
\c C_0^*(\c O_{0, q}) := \left\{ \sum_{u \in M(f)} \xi(u) \tilde \pi^{-w(u)} x^{-u} \mid \xi(u) \in {\c O}_{0, q}  \right\},
\]
equipped with the sup-norm on the set of coefficients $\{ \xi(u) \}_{u \in M(f)}$.  Define the projection (or truncation) map 
\[
\text{pr}_{M(f)}: \quad \sum_{u \in \bb Z^n} A(u) x^{-u} \longmapsto \sum_{u \in M(f)} A(u) x^{-u}.
\]
For each $m \geq 1$, define
\[
\alpha_m^* := \text{pr}_{M(f)} \circ F_m(\hat t, \lambda, x) \circ \Phi_x^m \circ \sigma^m,
\]
where $\sigma \in Gal(\Omega/\Omega_0)$ acts on coefficients (as mentioned above), and $\Phi_x$ acts on monomials by $\Phi_x(x^u) := x^{p u}$.

\begin{lemma}\label{L: better conv}
$\alpha_m^*: \c C_0^*(\c O_{0, p^m}) \rightarrow \c C_0^*({\c O}_{0, p^m})$ is a linear map over $\c O_{0, p^m}$. Furthermore, writing 
\[
\alpha_m^*(\tilde \pi^{-w(v)} x^{-v}) = \sum_{z \in M(f)} C_v(z) \tilde \pi^{-w(z)} x^{-z}
\]
with $C_v(z) \in \c O_{0, p^m}$, then $C_v(z) \rightarrow 0$ in $\c O_{0, p^m}$ as $w(v) \rightarrow \infty$. In addition, we may write $\alpha_m^*(1) = 1 + \eta_m^*(\lambda, x),$  with $\eta_m^*(\lambda, x) \in \c C^*_0(\c O_{0,p^m})$ having $|\eta_m^*| \leq |\tilde{\pi}|.$
\end{lemma}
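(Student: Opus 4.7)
The plan is a direct computation using the expansion (\ref{E: splitting}) of $F_m$. I would first apply the definition
$\alpha_m^* = \mathrm{pr}_{M(f)} \circ F_m \circ \Phi_x^m \circ \sigma^m$
to a basis element $\tilde\pi^{-w(v)} x^{-v}$. Since $\sigma$ fixes $\tilde\pi$ and $\Phi_x^m$ sends $x^{-v}$ to $x^{-p^m v}$, extracting the coefficient of $\tilde\pi^{-w(z)} x^{-z}$ forces the $x$-exponent in $F_m$ to be $p^m v - z$, giving
\[
C_v(z) \;=\; \tilde\pi^{w(z) - w(v)} \sum_{\gamma \in M(\Gamma)} B(\gamma,\, p^m v - z)\; \tilde\pi^{(w_\Gamma(\gamma) + w(p^m v - z))/p^{m-1}}\; \lambda^\gamma,
\]
with the convention $B(\gamma, u) := 0$ when $u \notin M(f)$. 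The $\sigma^m$ factor acts only on $R$-coefficients, yielding the usual $\sigma^m$-twisted $\c O_{0,p^m}$-linearity that is read in the statement of the lemma.

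Next I would bound $C_v(z)$ using subadditivity $w(p^m v - z) \geq p^m w(v) - w(z)$. After subtracting the weight $w_{p^m\Gamma}(\gamma) = w_\Gamma(\gamma)/p^m$ that is absorbed into the structure of $\c O_{0,p^m}$, the residual $\tilde\pi$-exponent of the $\gamma$-summand is
\[
w(z) - w(v) + \frac{w_\Gamma(\gamma) + w(p^m v - z)}{p^{m-1}} - \frac{w_\Gamma(\gamma)}{p^m} \;\geq\; (p-1)\, w(v) + \Big(1 - \tfrac{1}{p^{m-1}}\Big) w(z) + \frac{p-1}{p^m}\, w_\Gamma(\gamma),
\]
which is manifestly non-negative. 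Since $|B(\gamma, \cdot)| \leq 1$, this inequality simultaneously (i) forces each $\lambda^\gamma$-coefficient of $C_v(z)$ to lie in $R$, (ii) gives its decay to $0$ as $w_\Gamma(\gamma) \to \infty$ so that $C_v(z) \in \c O_{0,p^m}$, and (iii) yields the uniform bound $|C_v(z)| \leq |\tilde\pi|^{(p-1) w(v)}$. Item (iii) is precisely the desired convergence $C_v(z) \to 0$ as $w(v) \to \infty$.

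Finally I would specialize to $v = 0$ for the claim on $\alpha_m^*(1)$. Then $C_0(z)$ involves $B(\gamma, -z)$, and since $M(f) \cap (-M(f)) = \{0\}$ this vanishes unless $z = 0$. Because $f$ and $P$ have no constant $x$-term by assumption, every non-trivial monomial contribution to each factor $\theta(\hat t_u x^u)$ or $\theta(\hat A(\gamma, v) \lambda^\gamma x^v)$ in $F_m$ carries strictly positive $x$-weight; the only $x$-constant contribution to $F_m$ is the product of the $\theta_0 = 1$ terms, so $B(0,0) = 1$ and $B(\gamma, 0) = 0$ for all $\gamma \neq 0$. Hence $\alpha_m^*(1) = 1$ exactly, $\eta_m^* = 0$, and the bound $|\eta_m^*| \leq |\tilde\pi|$ is trivial. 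The only real work is the weight bookkeeping carried out in the middle paragraph; I foresee no serious obstacle beyond tracking the exponents of $\tilde\pi$ carefully.
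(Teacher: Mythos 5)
Your first two paragraphs follow the paper's own proof: the explicit formula for $C_v(z)$ obtained from the expansion (\ref{E: splitting}), the subadditivity bound $w(p^m v - z) \geq p^m w(v) - w(z)$, and the resulting uniform estimate $|C_v(z)| \leq |\tilde \pi|^{(p-1)w(v)}$ are exactly the content of the paper's display culminating in (\ref{E: an estimate}), and they do establish both membership in $\c C_0^*(\c O_{0,p^m})$ and the decay as $w(v) \rightarrow \infty$.

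The final paragraph, however, has a genuine gap. You assert that $M(f) \cap (-M(f)) = \{0\}$ and that the $x$-constant term of $F_m$ is exactly $1$ (i.e.\ $B(\gamma,0) = 0$ for all $\gamma \neq 0$), and conclude $\alpha_m^*(1) = 1$, $\eta_m^* = 0$. Neither assertion follows from the standing hypotheses, and both fail in general: $Cone(f)$ need not be salient, so the monoid $M(f)$ need not be pointed (indeed the later ``Eigenvector'' section is built on the sets $M_0(f)$, $M_0(\Gamma)$, $M_0(H)$, which would all be trivial and would force $\eta(\Lambda,\lambda,x) = 1$ if your claim held); and the no-constant-term hypothesis only forbids a \emph{single} monomial of $f$ or $P$ from having $x$-degree $0$, while a product of several $\theta$-factors in $F_m$ can have total $x$-degree $0$ with nonzero $\lambda$-degree. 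For instance, with $n = s = 1$, $f = t_1 x^2 + t_2 x^{-1}$ and $P = A\lambda x$ (so $w(1) = 1/2$, which is admissible), the contribution $\theta_1(\hat A \lambda x)\,\theta_1(\hat t_2 x^{-1})$ produces a nonzero coefficient of $\lambda^1 x^0$ in $F$, and since $M(f) = \bb Z$ the projection $\text{pr}_{M(f)}$ also retains terms $x^{-z}$ with $z \neq 0$; hence $\eta_m^* \neq 0$. As a consequence the last assertion of the lemma, $|\eta_m^*| \leq |\tilde \pi|$, is not proved by your argument: specializing your uniform bound to $v = 0$ gives only $|C_0(z)| \leq 1$. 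The paper closes this point by writing out $\alpha_m^*(1)$ explicitly, isolating the term $1$ coming from $B(0,0) = 1$ (the $\gamma = 0$, $z = 0$ term) and checking that every remaining term --- the $\gamma \neq 0$, $z = 0$ terms carrying $\tilde \pi^{\,w_\Gamma(\gamma)/p^{m-1}}$ and the $z \neq 0$ terms carrying $\tilde \pi^{\,w(z) + w(-z)/p^{m-1}}$ --- comes with a positive power of $\tilde \pi$; that estimate is the step your write-up still needs to supply.
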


\begin{proof}
We consider $\alpha_m^*(\tilde \pi^{-w(v)} x^{-v})$ with $v \in M(f)$. Using (\ref{E: splitting}), we may write this as
\[
\alpha_m^*(\tilde \pi^{-w(v)} x^{-v}) = \sum_{z \in M(f), \gamma \in M(\Gamma)} B(\gamma, -z + p^m v) \tilde \pi^{w(\gamma) / p^{m-1}} \lambda^\gamma \cdot \tilde \pi^{w(z) + ( w(-z + p^m v) / p^{m-1})} \cdot \tilde \pi^{-w(z)} x^{-z}.
\]
Since 
\[
-w(v) + w(z) + \frac{1}{p^{m-1}} w(-z + p^m v) \geq \frac{p^{m-1} - 1}{p^{m-1}} w(z) + p w(v),
\]
we see that 
\begin{equation}\label{E: an estimate}
\alpha_m^*(\tilde \pi^{-w(v)} x^{-v}) = \tilde \pi^{(p-1) w(v)} \zeta_v^*(\lambda),
\end{equation}
where $\zeta_v^*(\lambda,x) \in \c C_0^*(\c O_{0, p^m})$. 

If $\xi^* \in \c C_0^*(\c O_{0, p^m})$ with $\xi^* = \sum_{v \in M(f)} A_v(\lambda) \tilde \pi^{-w(v)} x^{-v}$, then 
\[
\alpha_m^*( \xi^*) = \sum_{v \in M(f)} \tilde \pi^{(p-1)w(v)} A_v(\lambda) \eta_v^*(\lambda) \in \c C_0^*(\c O_{0, p^m}).
\]
Finally, note that by the above,
\[
\alpha_m^*(1) = 1 + \sum_{\gamma \in M(\Gamma)-0} B(\gamma,0) \tilde{\pi} ^{w(\gamma)/p^{m-1}} \lambda^{\gamma} + \sum_{z \in M(f)-0, \gamma \in M(\Gamma)} B(\gamma, -z) \tilde{\pi}^{w(z) + (w(-z)/p^{m-1})}(\tilde{\pi}^{w(\gamma)/p^{m-1}} \lambda^{\gamma}) (\tilde{\pi}^{w(-z)} x^{-z}).
\] 
This proves the lemma.
\end{proof}

Define
\[
\c A_0 := \left\{ \sum_{\gamma \in M(\Gamma)} A(\gamma) \lambda^\gamma : A(\gamma) \in R \text{ and } A(\gamma) \rightarrow 0 \text{ as } w(\gamma) \rightarrow \infty \right\}.
\]
For $q_1$ and $q_2$ any two powers of the prime $p$, define a pairing $(\cdot, \cdot): \c C_0(\c O_{0, q_1}) \times \c C_0^*(\c O_{0, q_2}) \rightarrow \c A_0$ by
\[
(\xi, \xi^*) := \text{ the constant term with respect to $x$ of the product $\xi \cdot \xi^*$.}
\]
This product is well-defined since if $\{\eta_1(v)\}_{v \in M(\Gamma)} \subset \c O_{0, q_1}$ with $\eta_1(v) \rightarrow 0$ as $w(v) \rightarrow \infty$, and $\{ \eta_2(v) \}_{v \in M(\Gamma)} \subset \c O_{0, q_2}$, then $\sum_{v \in M(\Gamma)} \eta_1(v) \eta_2(v) \in \c A_0$. Next, observe that for $\xi \in \c C_0(\c O_0)$ and $\xi^* \in \c C_0^*(\c O_{0, p^m})$, writing $F_m$ for $F_m(\hat t, \lambda, x)$, then
\begin{equation}\label{E: pairing1}
( (\psi_x^m \circ F_m) \xi, \xi^*) = ( F_m \xi, \Phi_x^m \xi^*) = (\xi, (\text{pr}_{M(f)} \circ F_m \circ \Phi_x^m)( \xi^*)).
\end{equation}

\bigskip\noindent{\bf Symmetric powers.} We construct in a now familiar manner formal $k$-th symmetric powers of $\c C_0(\c O_0)$ and $\c C_0^*(\c O_{0, p^m})$ over $\c O_0$. Similar to the construction used above, we consider a linear order on $\{ u \in M(f) \}$ under which the weight is nondecreasing, say $0 = u_0 \leq u_1 \leq \cdots$. We will for convenience of notation write the ``basis'' as $\{ E_u := \tilde \pi^{w(u)} x^u \mid u \in M(f) \}$, and the $k$-th symmetric power of the basis as
\[
E_{\b u} := E_{u_{j_1}} E_{u_{j_2}} \cdots E_{u_{j_k}}, \qquad 0 \leq j_1 \leq j_2 \leq \cdots \leq j_k,
\] 
where $\b u$ runs over multisets of indices of cardinality $k$, say
\[
\{ \b u = (u_{j_1}, u_{j_2}, \ldots, u_{j_k}) \mid 0 \leq u_{j_1} \leq u_{j_2} \leq \cdots \leq u_{j_k} \}.
\]
Defining
\[
Sym^k_{\c O_0} \c C_0(\c O_0) := \left\{ \xi = \sum_{|\b u| = k} \xi_{\b u}(\lambda) E_{\b u} \mid \xi_{\b u}(\lambda) \in \c O_0, \xi_{\b u}(\lambda) \rightarrow 0 \text{ as } w(\b u) \rightarrow +\infty  \right\},
\]
then we define the map
\[
Sym^k \alpha_m: Sym^k_{\c O_0} \c C_0(\c O_0) \rightarrow Sym^k_{\c O_{0, p^m}} \c C_0(\c O_{0, p^m})
\]
as follows. Let
\begin{align*}
\alpha_m(\tilde \pi^{w(u)} x^u) &= \sum_{v \in M(f)} \mathcal{A}^m_{v, u}(\lambda) \tilde \pi^{w(v)} x^v \\
&= \sum_{v \in M(f)} \mathcal{A}^m_{v, u}(\lambda) E_v.
\end{align*}
We know from Section \ref{S: unit root formula} that 
\[
\mathcal{A}^m_{u,v} = \sum_{\gamma \in M(\Gamma), v \in M(f)} \tilde{\pi}^{(u)-w(v)} \mathcal{B}^m(\gamma, p^mv-u)\lambda^{\gamma}
\]
Then
\[
Sym^k \alpha_m(E_{u_{j_1}} E_{u_{j_2}} \cdots E_{u_{j_k}}) = \sum \mathcal{A}^m_{v_{l_1}, u_{j_1}}(\lambda) \cdots \mathcal{A}^m_{v_{l_k}, u_{j_k}}(\lambda) E_{v_{l_1}} \cdots E_{v_{l_k}},
\]
where the sum runs over all $v_{l_i} \in M(f)$ for each $i$, $1 \leq i \leq k$. Since by above, $|\alpha_m(\tilde \pi^{w(u)} x^u)| \leq |\tilde \pi|^{w(u) \frac{p^{m -1}-1}{p^{m-1}}}, $ therefore $Sym^k(\alpha_m)$ is a completely continuous map. The map $\Upsilon$ may be extended to $Sym^k_{\c O_0}(\c C_0(\c O_0)) \hookrightarrow \c S_0(\c O_0)$ as follows. For $\b u = (u_{j_1}, \ldots, u_{j_k})$ an ordered multiset of cardinality $k$ with elements in $M(f)$, set 
\[
\Upsilon(E_{\b u}) = 
\begin{cases}
e_{\b u} & \text{if } j_1 > 0 \\
e_{u_{j_{r+1}}} e_{u_{j_{r+2}}} \cdots e_{u_{j_k}} & \text{if } j_1 = j_2 = \cdots = j_r = 0.
\end{cases}
\]
Thus $\Upsilon(Sym^k_{\c O_0} \c C_0(\c O_0))$ consists of all power-series with coefficients in $\c O_0$ and support in monomials $e_{\b u}$ of degree $\leq k$, with coefficients going to $0$ as $w(\b u) = w(u_1) + \cdots + w(u_r) \rightarrow \infty$.

We have as well a dual variant 
\[
Sym^k_{\c O_{0, p^m}} \c C_0^*(\c O_{0, p^m}) := \{ \sum_{ |\b u| = k} A_{\b u}(\lambda) E_{\b u}^*\mid A_{\b u}(\lambda) \in \c O_{0, p^m} \}
\]
where we denote $E_u^* := \tilde \pi^{-w(u)} x^{-u}$ for each $u \in M(f)$, and using the linear order above write for each multiset $\b u = (u_{j_1}, \ldots, u_{j_k})$ of cardinality $k$ of indices, with $j_1 \leq \cdots \leq j_k$ we set $E_{\b u}^* := E_{u_1}^* \cdots E_{u_k}^*$. Then
\[
Sym^k_{\c O_{0, p^m}} \c C_0^*(\c O_{0, p^m}) = \{ \sum_{ |\b u| = k} \xi(\b u) E_{\b u}^* \mid \xi(\b u) \in \c O_{0, p^m} \},
\]
there being no requirement here that the coefficients tend to 0 as $w(\b u) \rightarrow \infty$. Since $\alpha_m^*: \c C_{0}^*(\c O_{0, p^m}) \rightarrow \c C_{0}^*(\c O_{0, p^m})$, we may define for $\b u = (u_{j_1}, \ldots, u_{j_k})$,
\[
Sym^k(\alpha_m^*)(E_{\b u}^*) = \sum \mathcal{A}^*_{v_{l_1}, u_{j_1}}(\lambda) \mathcal{A}^*_{v_{l_2}, u_{j_2}}(\lambda) \cdots \mathcal{A}^*_{v_{l_k}, u_{j_k}}(\lambda) E_{\b v}^*
\]
where $\b v = (v_{l_1}, \ldots v_{l_k})$, the sum runs over $v_{l_i} \in \{ \tilde \pi^{- w(u)} x^{-u} \mid u \in M(f) \}$, and where $\alpha_m^*(\tilde \pi^{-w(u)} x^{-u}) = \sum_{v \in M(f)} \mathcal{A}^*_{u, v}(\lambda) \tilde \pi^{-w(v)} x^{-v}$. The map $Sym^k(\alpha_m^*)$ then is defined on $Sym^k_{\c O_{0, p^m}}$ since as we noted earlier in (\ref{E: an estimate}), $|\alpha_m^*(\tilde \pi^{-w(u)} x^{-u}) | \leq |\tilde \pi|^{w(u)(p-1)}$.

We extend the pairing above to these symmetric power spaces by ``linearly'' extending the following: for decomposable elements $\xi = \xi_1 \cdots \xi_k \in Sym^k_{\c O_{0, q_1}} \c C_0(\c O_{0, q_1})$ and $\xi^* = \xi_1^* \cdots \xi_k^* \in Sym^k_{\c O_{0, q_2}} \c C_0^*(\c O_{0, q_2})$,
\begin{equation}\label{E: sym pairing def}
(\xi, \xi^*) := (\xi_1 \cdots \xi_k, \xi_1^* \cdots \xi_k^*)_k := \frac{1}{k!} \sum_{\sigma \in S_k} \prod_{i=1}^k (\xi_i, \xi_{\sigma(i)}^*),
\end{equation}
where $S_k$ denotes the symmetric group on $k$ letters. This pairing $(\cdot, \cdot)_k$ is well-defined since $\c A_0$ is a ring. Observe that it follows from (\ref{E: pairing1}) that for $\xi \in Sym^k \c C_0(\c O_0)$ and $\xi^* \in Sym^k \c C_0^*(\c O_{0, q_{\bar t}})$, 
\begin{equation}\label{E: dual sym}
(Sym^k \> \alpha_{a d(\bar t)} \xi, \xi^*)_k = (\xi, Sym^k \>\alpha_{a d(\bar t)}^* \xi^*)_k.
\end{equation}

\bigskip\noindent{\bf Infinite symmetric powers.} Denote by $\c S^*_0(\c O_0) := \c O_0[[ e_u^* : u \in M \setminus \{0\}]]$ the formal power series ring over $\c O_0$ in the variables $\{ e_u^* \}_{u \in M \setminus \{0\} }$, a set of formal symbols indexed by $M \setminus 0$. We endow $\c S_0^*( \c O_0)$ with the sup-norm on coefficients. Monomials in $\c S_0^*(\c O_0)$ have the form $e_{\b u}^* := e_{u_1}^* e_{u_2}^* \cdots e_{u_r}^*$, where $u_1, \ldots, u_r \in M(f) \setminus \{0\}$ for $r > 0$, and $e_0^* := 1$ when $r = 0$. Thus, elements in the ring may be described by
\[
\c S^*_0(\c O_0) := \left\{ \xi^* = \sum_{\b u \in \c S(M)} \xi^*(\b u) e_{\b u}^* \mid \xi^*(\b u) \in \c O_0  \right\}.
\]
Using the same notation as before, define the embedding $\Upsilon: \c C_0^*(\c O_0) \hookrightarrow \c S_0^*(\c O_0)$ by $\Upsilon(\tilde \pi^{-w(u)} x^{-u}) := e_u^*$ for $u \in M \setminus \{ 0\}$, and $\Upsilon(1) := e_0^* = 1$. For each $m \geq 1$, recall from Lemma \ref{L: better conv},  $\alpha_m^*(1)= 1 + \eta_m^*(\lambda, x)$ for some element $\eta_m^* \in \c C_0^*({\c O}_{0, p^m})$ satisfying $|\eta^*_m| < 1$. It follows that $\left(\Upsilon \circ \alpha_m^* (1) \right)^{\tau} \in \c S_0^*({\c O}_{0, p^m})$ for any $\tau \in \bb Z_p$. For $m \geq 1$,  we define the map $[\alpha_m^*]_\kappa: \c S_0^*(\c O_{0, p^m}) \rightarrow \c S_0^*({\c O}_{0, p^m})$ by 
\[
[\alpha_m^*]_\kappa( e_{u_1}^* \cdots e_{u_r}^* ) := (\Upsilon ( \alpha_m^*(1) ))^{\kappa - r} (\Upsilon ( \alpha_m^* (\tilde \pi^{-w(u_1)}\ x^{-u_1}) )) \cdots (\Upsilon ( \alpha_m^* (\tilde \pi^{-w(u_r)} x^{-u_r}))).
\]
The product on the right side makes sense and lives in $\c S_0^*(\c O_{0, p^m})$ since $\c S_0^*(\c O_{0, p^m})$ is a ring and each factor is clearly in $\c S_0^*(\c O_{0, p^m})$. Furthermore,
\begin{equation}\label{E: number14}
|[\alpha_m^*]_{\kappa}(e_{\b u}^*) | \leq |\tilde{\pi}^{(p-1)w(\b u)}|.
\end{equation}

Define the $R$ module
\[
\c O_{0, q}^* := \left\{ \zeta^* = \sum_{\gamma \in M(\Gamma)} \zeta^*(\gamma) \tilde \pi^{- w_{q\Gamma}(\gamma)} \lambda^{-\gamma} \mid \zeta^*(\gamma) \in R \right\}.
\]
Here  we do not insist that coefficients go to $0$ and we do not claim $\c O_{0,q}^* $ is a ring. As usual we define an absolute value on $\c O_{0, q}^*$ by $|\zeta^*| := \sup_{\gamma \in M(\Gamma)} | \zeta^*(\gamma)|$. 
For series in $\lambda$, we define a projection (or truncation) map
\[
\text{pr}_{M(\Gamma)}: \quad
\sum_{\gamma \in \bb Z^s} A(\gamma) \lambda^{-\gamma} \longmapsto \sum_{\gamma \in M(\Gamma)} A(\gamma) \lambda^{-\gamma}.
\]
Note that for any $q$ a power of the prime $p$, if $\gamma, \gamma', \textup{and} \ \delta$ all belong to $M(\Gamma)$ with $\gamma - \gamma' = -\delta$ then ${w_{q \Gamma}(\gamma) - w_{q \Gamma}(\gamma') \geq -w_{q \Gamma}(\delta)}$. It follows that for $\xi \in \c O_{0, q}$ and $\xi^* \in \c O_{0, q}^*$,   
\begin{equation}\label{E: proj}
pr_{M(\Gamma)}( \xi \cdot \xi^*) \in \c O_{0, q}^*.
\end{equation}
Define the $R$ module
\[
\c S_0^*(\c O_0^*) :=  \left\{ \omega^* = \sum_{\gamma \in M(\Gamma), \b u \in \c S(M)} \omega^*(\gamma, \b u) \tilde \pi^{-w_\Gamma(\gamma)}\lambda^{-\gamma} e_{\b u}^* \mid \omega^*(\gamma, \b u) \in R  \right\}
\]
Define the map $\Phi_\lambda$ by $\lambda \mapsto \lambda^p$. We define an $R$-linear map
\[
\beta_{\kappa, \bar t}^* := pr_{M(\Gamma)} \circ [ \alpha_{a d(\bar t)}^* ]_\kappa \circ \Phi_\lambda^{a d(\bar t)}
\]
by ``linearly'' extending over $R$ the action
\[
\beta_{\kappa, \bar t}^*( \lambda^{-\gamma} e_{\b u}^*) = pr_{M(\Gamma)}\left( \lambda^{-q_{\bar t} \gamma} \cdot [ \alpha_{a d(\bar t)}^* ]_\kappa( e_{\b u}^*) \right).
\]

\begin{lemma}\label{L: beta is endo}
$\beta_{\kappa, \bar t}^*$ is an $R$-linear endomorphism of $\c S_0^*(\c O_0^*)$. 
\end{lemma}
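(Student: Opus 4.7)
The plan is to verify that $\beta^*_{\kappa, \bar t}$ preserves $\c S_0^*(\c O_0^*)$ in two stages. First I would check the action on a single basis element $\tilde \pi^{-w_\Gamma(\gamma)} \lambda^{-\gamma} e_{\b u}^*$, showing each output coefficient lies in $R$; then I would extend $R$-linearly and verify that the resulting double sums converge in $R$.

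Since $[\alpha^*_{a d(\bar t)}]_\kappa(e_{\b u}^*) \in \c S_0^*(\c O_{0, q_{\bar t}})$, I would write
\[
[\alpha^*_{a d(\bar t)}]_\kappa(e_{\b u}^*) = \sum_{\b v \in \c S(M),\, \delta \in M(\Gamma)} C_{\b u, \b v}(\delta) \, \tilde \pi^{w_\Gamma(\delta)/q_{\bar t}} \, \lambda^\delta \, e_{\b v}^*,
\]
where (\ref{E: number14}) gives the uniform bound $|C_{\b u, \b v}(\delta)| \leq |\tilde \pi|^{(p-1)w(\b u)}$, and the very definition of $\c O_{0, q_{\bar t}}$ gives, for each fixed $(\b u, \b v)$, $C_{\b u, \b v}(\delta) \to 0$ as $w_\Gamma(\delta) \to \infty$. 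Applying $\Phi_\lambda^{a d(\bar t)}$ (which sends $\lambda^{-\gamma} \mapsto \lambda^{-q_{\bar t}\gamma}$), multiplying by the above expansion, and projecting via $\text{pr}_{M(\Gamma)}$ (keeping only monomials with exponent $\mu := q_{\bar t}\gamma - \delta \in M(\Gamma)$) yields
\[
\beta^*_{\kappa, \bar t}\bigl(\tilde \pi^{-w_\Gamma(\gamma)} \lambda^{-\gamma} e_{\b u}^*\bigr) = \sum_{\mu \in M(\Gamma),\, \b v \in \c S(M)} C_{\b u, \b v}(q_{\bar t}\gamma - \mu) \, \tilde \pi^{E(\gamma, \mu)} \, \tilde \pi^{-w_\Gamma(\mu)} \lambda^{-\mu} e_{\b v}^*,
\]
with $E(\gamma, \mu) := w_\Gamma(q_{\bar t}\gamma - \mu)/q_{\bar t} - w_\Gamma(\gamma) + w_\Gamma(\mu)$. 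By subadditivity of $w_\Gamma$, $w_\Gamma(q_{\bar t}\gamma) \leq w_\Gamma(q_{\bar t}\gamma - \mu) + w_\Gamma(\mu)$, which gives $E(\gamma, \mu) \geq (1 - 1/q_{\bar t}) w_\Gamma(\mu) \geq 0$. Hence each basis-element coefficient lies in $R$, confirming that the image of every basis element belongs to $\c S_0^*(\c O_0^*)$.

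For a general $\omega^* = \sum_{\gamma, \b u} \omega^*(\gamma, \b u) \tilde \pi^{-w_\Gamma(\gamma)} \lambda^{-\gamma} e_{\b u}^*$, the coefficient of $\tilde \pi^{-w_\Gamma(\mu)} \lambda^{-\mu} e_{\b v}^*$ in $\beta^*_{\kappa, \bar t}(\omega^*)$ is
\[
\sum_{\gamma \in M(\Gamma),\, \b u \in \c S(M)} \omega^*(\gamma, \b u) \, C_{\b u, \b v}(q_{\bar t}\gamma - \mu) \, \tilde \pi^{E(\gamma, \mu)},
\]
which must converge in $R$. Given $\epsilon > 0$, the uniform bound forces all but finitely many $\b u$ (using that $\{\b u : w(\b u) \leq W\}$ is finite for every $W$) to yield terms of norm $< \epsilon$; for each of the finitely many remaining $\b u$, the property $C_{\b u, \b v}(\delta) \to 0$ as $w_\Gamma(\delta) \to \infty$ ensures only finitely many $\gamma$ yield $|\textrm{term}| \geq \epsilon$. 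Thus only finitely many pairs $(\gamma, \b u)$ contribute non-negligibly, so the sum converges in $R$ and $\beta^*_{\kappa, \bar t}(\omega^*) \in \c S_0^*(\c O_0^*)$. The main subtlety is this last convergence argument: it requires simultaneously invoking the uniform weight bound (\ref{E: number14}) (controlling the sum over $\b u$) and the built-in convergence condition of $\c O_{0, q_{\bar t}}$ (controlling the sum over $\gamma$ for each fixed small-weight $\b u$).
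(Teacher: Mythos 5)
Your proposal is correct and follows essentially the same route as the paper: expand $[\alpha^*_{a d(\bar t)}]_\kappa(e_{\b u}^*)$ over the basis, use the subadditivity estimate $-w_\Gamma(\gamma)+w_{q_{\bar t}\Gamma}(q_{\bar t}\gamma-\mu)+w_\Gamma(\mu)\geq\bigl(1-\tfrac{1}{q_{\bar t}}\bigr)w_\Gamma(\mu)\geq 0$ to keep coefficients in $R$, and invoke (\ref{E: number14}) together with the decay built into $\c O_{0,q_{\bar t}}$ for convergence of the double sum. Your treatment of that last convergence (splitting the $\b u$-direction from the $\gamma$-direction) is just a more explicit rendering of the step the paper states tersely.
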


\begin{proof}
We have remarked already that $[\alpha_{a d(\bar t)}^*]_\kappa$ is a well-defined endomorphism of $\c S_0^*(\c O_{0, q_{\bar t}})$. As such, we may write for each $\b u \in \c S(M)$,
\[
[ \alpha_{a d(\bar t)}^* ]_\kappa( e_{\b u}^*) = \sum_{\sigma \in M(\Gamma), \b v \in S(M)} B_{\b u}(\sigma, \b v) \tilde \pi^{w_{q_{\bar t}\Gamma}(\sigma)} \lambda^\sigma e_{\b v}^* \in \c S_0^*(\c O_{0, q_{\bar t}}),
\]
with $B_{\b u}(\sigma, \b v) \in R$, $B_{\b u}(\sigma, \b v) \rightarrow 0$ as $w_{q_{\bar t}\Gamma}(\sigma) + w(\b v) \rightarrow \infty$ using (\ref{E: number14}). For $\omega^* = \sum_{\gamma \in M(\Gamma), \b u \in S(M)} \omega^*(\gamma, \b u) \tilde \pi^{-w_\Gamma(\gamma)} \lambda^{-\gamma} e_{\b u}^* \in S_0^*(\c O_0^*)$, we have 
\begin{align*}
\beta_{\kappa, \bar t}^*(\omega^*) &= \text{pr}_{M(\Gamma)}\left( \sum_{\gamma \in M(\Gamma), \b u \in S(M)} \omega^*(\gamma, u) \tilde{\pi}^{-w_{\Gamma}(\gamma)}\lambda^{- q_{\bar t} \gamma} \cdot [\alpha_{a d(\bar t)}^*]_\kappa (e_{\b u}^*) \right) \\
&= pr_{M(\Gamma)} \left( \sum_{\gamma \in M(\Gamma)} \lambda^{- q_{\bar t} \gamma} \sum_{ \b u \in S(M)} \omega^*(\gamma, \b u) \sum_{\sigma \in M(\Gamma), \b v \in S(M)} B_{\b u}(\sigma, \b v) \tilde \pi^{-w_{q_{\bar t} \Gamma}(\sigma)} \tilde{\pi}^{-w_{\Gamma}(\gamma)} \lambda^\sigma e_{\b v}^* \right) \\
&= \sum_{\tau \in M(\Gamma), \b v \in S(M)} C(\tau, \b v) \tilde \pi^{-w_\Gamma(\tau)} \lambda^{-\tau} e_{\b v}^*,
\end{align*}
where
\[
C(\tau, \b v) := \sum_{\b u \in S(M)} \sum_{\substack{\gamma, \sigma \in M(\Gamma) \\ q_{\bar t} \gamma - \sigma = \tau}} \omega^*(\gamma, \b u) B_{\b u}(\sigma, \b v) \tilde \pi^{-w_\Gamma(\gamma) + w_{q_{\bar t}\Gamma}(\sigma) + w_\Gamma(\tau)}.
\]
Observe that the exponent of $\tilde \pi$ satisfies
\[
-w_\Gamma(\gamma) + w_{q_{\bar t}\Gamma}(\sigma) + w_\Gamma(\tau) \geq \left( 1 - \frac{1}{q_{\bar t}} \right) w_\Gamma(\tau),
\]
so that the term $\tilde \pi^{- w_\Gamma(\gamma) + w_{q_{\bar t} \Gamma}(\sigma) + w_\Gamma(\tau)}$ is bounded in norm by 1 since $w(\tau) \geq 0$, and  $\omega^*(\gamma, \b u) \ \text{and} \  B_{\b u}(\sigma,v) \in R$. On the other hand, $B_{\b u}(\sigma, \b v) \rightarrow 0$ as $w_\Gamma(\sigma) + w(\b v) \rightarrow \infty$ so that the coefficient $C(\tau, \b v)$ is defined, in $R$, and $\beta_\kappa^*( \omega^*) \in \c S_0^*(\c O_0^*)$. Clearly it is $R$-linear. 
\end{proof}

\bigskip\noindent {\bf Estimation using finite symmetric powers.} It is useful to estimate $\beta_{\kappa, \bar t}$ and $\beta_{\kappa, \bar t}^*$ using finite symmetric powers. For monomials $e_{\b u}$ or $e_{\b u}^*$, with $\b u \in S(M)$, $\b u = (u_1, \ldots, u_r) \in ( M(f) \setminus 0)^r$, we say  as usual that the degree or length of $e_{\b u}$ or $e_{\b u}^*$ is $r$. For $\xi \in \c S_0(\c O_0)$, define $\text{length}(\xi)$ as the supremum of the lengths of the monomials $e_{\b u}$ in the support of $\xi$ (i.e. those terms appearing with non-zero coefficients). In the case $\text{length}(\xi) = r$, we may write $\xi = \sum_{|\b u| \leq r} \xi(\b u) e_{\b u}$, and $\xi$ may be a series (not a polynomial), since $M(f)$ and the set of monomials of degee $\leq r$ are infinite in general.  Similarly for $\xi_{\b u}^*$. 

Let $k$ be a positive integer. Define $\c S_0^{(k)}(\c O_0) := \{ \xi \in \c S_0(\c O_0) \mid \text{length}(\xi) \leq k \}$. Then the map 
\[
E_0^{k-r} E_{u_1} \cdots E_{u_r} \longmapsto e_{u_1} e_{u_2} \cdots e_{u_r}
\]
identifies $Sym^{k} \c C_0(\c O_0)$ with $\c S_0^{(k)}(\c O_0)$ as $\c O_0$-submodules in $\c S_0(\c O_0)$. Similarly, we identify $Sym^k \c C_0^*(\c O_0)$ in $\c S_0^*(\c O_0)$ as the $\c O_0$-submodule $\c S_0^{*(k)}(\c O_0)$ of power series in $\{ e_{\b u}^* \mid | \b u| \leq k \}$ with coefficients in $\c O_0$. By transfer of structure, we have a pairing $(\cdot, \cdot)_k: \c S_0^{(k)}(\c O_0) \times \c S_0^{*(k)}(\c O_0) \rightarrow \c O_0$.

We now work over $R$ and define a new pairing $\langle \cdot, \cdot \rangle_k: \c S_0^{(k)}(\c O_0) \times \c S_0^{*(k)}(\c O_0^*) \rightarrow \Omega$ as follows. (Here again $\c S_0^{*(k)}(\c O_0^*)$ is the $R$-submodule of $\c S_0^*(\c O_0^*)$ of series with support in monomials of degree $\leq k$, namely $\{ e_{\b u}^* \mid |\b u| \leq k \}$, with coefficients in $\c O_0^*$.) Let $\xi := \sum_{\gamma \in M(\Gamma), \b u \in S(M)} \xi(\gamma, \b u) \tilde \pi^{w_\Gamma(\gamma)} \lambda^\gamma e_{\b u} \in  \c S_0^{(k)}(\c O_0)$, and $\xi^* := \sum_{\sigma \in M(\Gamma), \b v \in S(M)} \xi^*(\sigma, \b v) \tilde \pi^{-w_\Gamma(\sigma)} \lambda^{-\sigma} e_{\b v}^* \in \c S_0^{*(k)}(\c O_0^*)$, set
\[
\langle \xi, \xi^* \rangle_k := \sum_{\gamma \in M(\Gamma), \b u \in S(M)} \xi(\gamma, \b u) \xi^*(\gamma, \b u) (e_{\b u}, e_{\b u}^*)_k,
\]
where $(\cdot, \cdot)_k$ was defined above. (Observe that as defined, a denominator $k!$ is introduced, so $(e_{\b u}, e_{\b u}^*)_k$ is a rational number with $p$-adic valuation bounded below by $-k/(p-1)$. This is independent of $\b u$, so $\langle \xi, \xi^* \rangle_k$ is well-defined and takes values in the $R$-submodule of $\Omega$ consisting of elements with $ord_p c \geq -k / (p-1)$.) It is useful to think of $\langle \xi, \xi^* \rangle_k$ as the constant term with respect to  $\lambda$ and the $e_{\b u}$ and $e_{\b u}^*$ of the product $\xi \cdot \xi^*$, where the product $e_{\b u} \cdot e_{\b v}^*$ is defined to be zero if $\b u \not= \b v$, and $(e_{\b u}, e_{\b u}^*)_k$ if $\b u = \b v$.

Let $k_m$ be a sequence of positive integers which tend to infinity (in the usual archimedean sense) and such that $\lim_{m \rightarrow \infty} k_m = \kappa$ $p$-adically. For each $m$ we have a Frobenius map $Sym^{k_m} (\alpha_{a d(\bar t)})$ on $Sym^{k_m} \c C_o(\c O_0)$, as well as a Frobenius map $Sym^{k_m}( \alpha^*_{a d(\bar t)})$ on $Sym^{k_m} \c C_0^*(\c O_{0, q_{\bar t}})$. By transport of structure, we have then a Frobenius map $[ \alpha_{a d(\bar t)} ]_{(\kappa; m)}$ on $\c S_0^{(k_m)}(\c O_0)$ and a dual  Frobenius $[\alpha_{a d(\bar t)}^* ]_{(\kappa; m)}$ on $\c S_0^*(\c O_{0, q_{\bar t}})$. We extend by zero these maps to all of $\c S_0(\c O_0)$ and $\c S_0^*(\c O_{0, q_{\bar t}})$, respectively. That is, we define 
\[
[ \alpha_{a d(\bar t)} ]_{(\kappa; m)}( e_{\b u}) := 
\begin{cases}
[ \alpha_{a d(\bar t)} ]_{k_m}( e_{\b u} ) & \text{if $|\b u| \leq k_m$} \\
0 & \text{otherwise.}
\end{cases} 
\]
To avoid any possible confusion, we note
\begin{align*}
[\alpha_{a d(\bar t)}]_{(\kappa; m)}( e_{u_1} \cdots e_{u_r}) &=  (\Upsilon \circ \alpha_{a d(\bar t)}(1))^{k_m - r} ( \Upsilon \circ \alpha_{a d(\bar t)}\tilde \pi^{w(u_1)} x^{u_1} ) \cdots ( \Upsilon \circ \alpha_{a d(\bar t)} \tilde \pi^{w(u_r)} x^{u_r}) ) \\
&\cong \left( Sym^{k_m} \alpha_{a d(\bar t)}  \right)( E_0^{k_m - r} E_{u_1} \cdots E_{u_r}),
\end{align*}
when $r \leq k_m$. Similarly
\[
[ \alpha_{a d(\bar t)}^* ]_{(\kappa; m)}( e_{\b u}^*) := 
\begin{cases}
[ \alpha_{a d(\bar t)}^* ]_{k_m}( e_{\b u}^* ) & \text{if $|\b u| \leq k_m$} \\
0 & \text{otherwise.}
\end{cases} 
\]

\begin{lemma}
$\lim_{m \rightarrow \infty} [ \alpha_{a d(\bar t)} ]_{(\kappa; m)} = [\alpha_{a d(\bar t)}]_\kappa$ as maps from $\c S_0(\c O_0) \rightarrow \c S_0(\c O_{0, q_{\bar t}})$.
\end{lemma}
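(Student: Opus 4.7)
The plan is to bound the operator norm of $D_m := [\alpha_{ad(\bar t)}]_\kappa - [\alpha_{ad(\bar t)}]_{(\kappa;m)}$ and show it tends to $0$ as $m \to \infty$. Since $\{e_{\b u}\}_{\b u \in \c S(M)}$ is an orthonormal basis (over the appropriate coefficient ring) of both source and target, and both operators are the linear extensions of their action on these basis monomials, it suffices to show $\sup_{\b u}|D_m(e_{\b u})| \to 0$ in $\c S_0(\c O_{0,q_{\bar t}})$. I would split on the length $r := |\b u|$ relative to $k_m$.

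If $r \leq k_m$, the two operators agree on $e_{\b u}$ except for the exponent on $\Upsilon\circ\alpha_{ad(\bar t)}(1)$: one uses $\kappa - r$, the other $k_m - r$. Factoring yields
\[
D_m(e_{\b u}) = (\Upsilon\alpha_{ad(\bar t)}(1))^{\kappa-r}\bigl(1 - (\Upsilon\alpha_{ad(\bar t)}(1))^{k_m-\kappa}\bigr) \prod_{i=1}^r \Upsilon\bigl(\alpha_{ad(\bar t)}(\tilde\pi^{w(u_i)}x^{u_i})\bigr).
\]
Set $\eta := \Upsilon\alpha_{ad(\bar t)}(1) - 1$, which satisfies $|\eta|<1$ by the 1-unit property (the family analogue of the fiber statement used in Section \ref{S: inf sum pow}). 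Then $(1+\eta)^{\kappa - r}$ has norm $\leq 1$, and the rightmost product has norm $\leq 1$ by (\ref{E: upsilon estimate}), so $|D_m(e_{\b u})| \leq |1 - (1+\eta)^{k_m - \kappa}|$. This bound is independent of $\b u$, and I would show it tends to $0$ via the binomial expansion $(1+\eta)^{k_m - \kappa} - 1 = \sum_{n\geq 1}\binom{k_m-\kappa}{n}\eta^n$ with $|\binom{k_m-\kappa}{n}|_p \leq 1$. A tail-vs-head split finishes the estimate: for $n \geq N$ the term is dominated by $|\eta|^N$ regardless of $m$, while for the finitely many $n < N$ each $\binom{k_m-\kappa}{n}$ is a polynomial in $k_m - \kappa$ vanishing at $0$, hence tends to $0$ as $k_m \to \kappa$ $p$-adically.

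If $r > k_m$, then $[\alpha_{ad(\bar t)}]_{(\kappa;m)}(e_{\b u}) = 0$ by definition, so $D_m(e_{\b u}) = [\alpha_{ad(\bar t)}]_\kappa(e_{\b u})$. Bounding $(\Upsilon\alpha_{ad(\bar t)}(1))^{\kappa-r}$ by $1$ and applying (\ref{E: upsilon estimate}) to each remaining factor yields $|D_m(e_{\b u})| \leq |\tilde\pi|^{c_{\bar t}\, w(\b u)}$, where $c_{\bar t} := (p^{ad(\bar t)-1}-1)/p^{ad(\bar t)-1}$. Letting $w_0 := \min_{u \in M(f)\setminus\{0\}} w(u) > 0$, we have $w(\b u) \geq r w_0 > k_m w_0$, so this is bounded by $|\tilde\pi|^{c_{\bar t}\, w_0 k_m}$, which tends to $0$ since $k_m \to \infty$ archimedeanly.

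The main obstacle is the simultaneous exploitation of the two convergences of the sequence $k_m$: the $p$-adic convergence $k_m \to \kappa$ controls Case $r \leq k_m$, while the archimedean divergence $k_m \to \infty$ controls Case $r > k_m$. Both properties are essential, and the choice of the hypothesis on $k_m$ is precisely what allows both estimates to be made small against the single parameter $m$.
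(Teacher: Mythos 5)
Your proposal is correct and follows essentially the same route as the paper: the same split on $r\leq k_m$ versus $r>k_m$, the same factorization $-\bigl(\Upsilon\alpha_{ad(\bar t)}(1)\bigr)^{\kappa-r}\bigl(1-(\Upsilon\alpha_{ad(\bar t)}(1))^{k_m-\kappa}\bigr)$ in the first case, and the same use of the estimate (\ref{E: upsilon estimate}) together with $w(\b u)\geq r w_0 > k_m w_0$ in the second. The only cosmetic difference is that where the paper quotes the quantitative bound $|1-(\Upsilon\alpha_{ad(\bar t)}(1))^{k_m-\kappa}|\leq|\tilde\pi^{\tau(m)+1}|$ (writing $\kappa=k_m+p^{\tau(m)}\sigma_m$, as in \cite[Lemma 2.2]{MR3249829}), you prove the needed convergence directly by a binomial tail-versus-head estimate, which is equally valid.
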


\begin{proof}
Write
\begin{equation}\label{E: double star}
\left( [\alpha_{a d(\bar t)}]_{(\kappa; m)} - [\alpha_{a d(\bar t)}]_\kappa \right) (e_{u_1} e_{u_2} \cdots e_{u_r} ) = \left( \Upsilon( \alpha_{a d(\bar t)}(1))^{k_m - r} - \Upsilon( \alpha_{ a d(\bar t)}(1))^{\kappa - r} \right) ( \Upsilon( \alpha_{a d(\bar t)}(\tilde \pi^{w(u_1)} x^{u_1} ))) \cdots ( \Upsilon( \alpha_{a d(\bar t)}(\tilde \pi^{w(u_r)} x^{u_r} ))).
\end{equation}
If $r \leq k_m$, then the first factor on the right may itself be factored into
\[
-\Upsilon( \alpha_{a d(\bar t)}(1))^{\kappa - r} (1 - (\Upsilon(\alpha_{a d(\bar t)}(1))^{k_m - \kappa}).
\]
Writing $\kappa = k_m + p^{\tau(m)} \sigma_m$ (with $\tau(m) \rightarrow \infty$ and $\sigma_m \in \bb Z_p$) then
\[
| 1 - (\Upsilon(\alpha_{a d(\bar t)}(1))^{k_m - \kappa} | \leq | \tilde \pi^{\tau(m) + 1} |
\]
as in the proof of \cite[Lemma 2.2]{MR3249829}, and using the estimate (\ref{E: upsilon estimate}). If $r > k_m$ then (\ref{E: double star}) becomes
\begin{align*}
\left( [\alpha_{a d(\bar t)}]_{(\kappa; m)} - [\alpha_{a d(\bar t)}]_\kappa \right) (e_{\b u} ) = -[\alpha_{a d(\bar t)}]_\kappa e_{\b u} = - \Upsilon( \alpha_{ a d(\bar t)}(1))^{\kappa - r} ( \Upsilon( \alpha_{a d(\bar t)}(\tilde \pi^{w(u_1)} x^{u_1} ))) \cdots ( \Upsilon( \alpha_{a d(\bar t)}(\tilde \pi^{w(u_r)} x^{u_r} )))
\end{align*}
so that focussing on the $r$ rightmost factors,
\[
\left|   \left( [\alpha_{a d(\bar t)}]_{(\kappa; m)} - [\alpha_{a d(\bar t)}]_\kappa \right) e_{\b u} \right| \leq |\tilde \pi|^{\frac{p^{a d(\bar t) - 1} - 1}{p-1} \frac{1}{p^{a d(\bar t) - 1}} w(\b u)}
\]
coming from (\ref{E: upsilon estimate}). But  $w(\b u) \geq r w_0 > k_m w_0$ (where $w_0 := \min \{ w(\b u) \mid \b u \in M(f) \setminus \{0\} \}$). In terms of the operator norm,
\[
\|  [\alpha_{a d(\bar t)}]_\kappa - [\alpha_{a d(\bar t)}]_{(\kappa; m)} \| \leq | \tilde \pi |^{\min \left\{ \tau(m) + 1, \frac{p^{a d(\bar t) -1} - 1}{p-1} \frac{1}{p^{a d(\bar t) -1}} k_m w_0 \right\} }.
\]
As $k_m$ and $\tau(m)$ both tend to infinity as $m$ grows, we see that  $\lim_{m \rightarrow \infty} [ \alpha_{a d(\bar t)} ]_{(\kappa; m)} = [\alpha_{a d(\bar t)}]_\kappa$.
\end{proof}

In an altogether similar manner, we have by Lemma \ref{L: better conv}, for $u \not= 0$, $\alpha_m^*(\tilde \pi^{-w(u)} x^{-u})$ belongs to $\c C_0^*(\c O_{0, p^m})$, and (recalling (\ref{E: an estimate}))
\[
| \alpha_m^*(\tilde \pi^{-w(u)} x^{-u}) | \leq | \tilde \pi |^{(p-1) w(u)}.
\]
Also $\alpha_m^*(1) = 1 + \eta^*(\lambda)$ with $\eta^*(\lambda) \in \c O_{0, p^m}$ and $|\eta^*(\lambda)| \leq |\tilde \pi|$. With these observations, an entirely similar argument shows 
$\lim_{m \rightarrow \infty} [ \alpha_{a d(\bar t)}^* ]_{(\kappa; m)} = [\alpha_{a d(\bar t)}^*]_\kappa$ as maps from $\c S_0^*(\c O_{0, q_{\bar t}}) \rightarrow \c S_0^*(\c O_{0, q_{\bar t}}) $. Define
\begin{align*}
\beta_{(\kappa; m), \bar t} &:= \psi_\lambda^{a d(\bar t)} \circ [ \alpha_{a d(\bar t)}]_{(\kappa; m)} \\
\beta_{(\kappa; m), \bar t}^* &:=  \text{pr}_{M(\Gamma)} \circ [ \alpha_{a d(\bar t)}^*]_{(\kappa; m)} \circ \Phi_\lambda^{a d(\bar t)}.
\end{align*}
As $\psi_\lambda$ and $\Phi_\lambda$ are bounded maps, it follows that as operators on $\c S_0(\c O_0)$ and $\c S_0^*(\c O_0^*)$, respectively, 
\begin{align}\label{E: beta limit}
\lim_{m \rightarrow \infty} \beta_{(\kappa; m), \bar t} &= \beta_{\kappa, \bar t} \\
\lim_{m \rightarrow \infty} \beta_{(\kappa; m), \bar t}^* &= \beta_{\kappa, \bar t}^*. \notag
\end{align}

\begin{lemma}
For $\xi \in \c S_0^{(k_m)}(\c O_0)$ and $\xi^*  \in \c S_0^{*(k_m)}(\c O_0^*)$, 
\begin{equation}\label{E: pairing with estimate}
\langle \beta_{(\kappa; m), \bar t} \xi, \xi^* \rangle_{k_m} = \langle \xi, \beta_{(\kappa; m), \bar t}^* \xi^* \rangle_{k_m}.
\end{equation}
\end{lemma}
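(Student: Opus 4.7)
The plan is to factor the claimed adjointness into two more elementary pieces matching the factorizations $\beta_{(\kappa; m), \bar t} = \psi_\lambda^{a d(\bar t)} \circ [\alpha_{a d(\bar t)}]_{(\kappa; m)}$ and $\beta_{(\kappa;m), \bar t}^* = \text{pr}_{M(\Gamma)} \circ [\alpha_{a d(\bar t)}^*]_{(\kappa;m)} \circ \Phi_\lambda^{a d(\bar t)}$. The first piece is an adjointness in the $e$-variables between $[\alpha_{a d(\bar t)}]_{(\kappa; m)}$ and $[\alpha_{a d(\bar t)}^*]_{(\kappa; m)}$; the second is an adjointness in the $\lambda$-variables between $\psi_\lambda^{a d(\bar t)}$ and $\text{pr}_{M(\Gamma)} \circ \Phi_\lambda^{a d(\bar t)}$. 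The statement (\ref{E: pairing with estimate}) should fall out by carrying out both pieces simultaneously.

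\textbf{Step 1: $e$-adjointness.} By construction, $[\alpha_{a d(\bar t)}]_{(\kappa; m)}$ is the transport of $Sym^{k_m} \alpha_{a d(\bar t)}$ under the identification $\Upsilon: Sym^{k_m}_{\c O_0} \c C_0(\c O_0) \xrightarrow{\sim} \c S_0^{(k_m)}(\c O_0)$, and similarly $[\alpha_{a d(\bar t)}^*]_{(\kappa; m)}$ is the transport of $Sym^{k_m} \alpha_{a d(\bar t)}^*$. Writing $[\alpha_{a d(\bar t)}]_{(\kappa; m)} e_{\b u} = \sum_{\b u'} G_{\b u, \b u'}(\lambda) e_{\b u'}$ and $[\alpha_{a d(\bar t)}^*]_{(\kappa;m)} e_{\b v}^* = \sum_{\b v'} G^*_{\b v, \b v'}(\lambda) e_{\b v'}^*$ for $|\b u|, |\b v| \leq k_m$, and noting that $(e_{\b u'}, e_{\b v}^*)_{k_m} = 0$ unless $\b u' = \b v$ as ordered multi-indices, equation (\ref{E: dual sym}) applied to the basis elements $E_0^{k_m - |\b u|} E_{\b u}$ and $E_0^{* (k_m - |\b v|)} E_{\b v}^*$ yields the matrix-coefficient identity
\[
G_{\b u, \b v}(\lambda)\, (e_{\b v}, e_{\b v}^*)_{k_m} \;=\; G^*_{\b v, \b u}(\lambda)\, (e_{\b u}, e_{\b u}^*)_{k_m}.
\]

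\textbf{Step 2: $\lambda$-adjointness.} For $\eta = \sum_\gamma A(\gamma) \tilde\pi^{w_\Gamma(\gamma)} \lambda^\gamma \in \c O_0$ and $\eta^* = \sum_\sigma B(\sigma) \tilde\pi^{-w_\Gamma(\sigma)} \lambda^{-\sigma}$ a formal sum with $\sigma \in M(\Gamma)$, a direct computation shows
\[
\text{const}_\lambda\bigl(\psi_\lambda^{a d(\bar t)}(\eta) \cdot \eta^*\bigr) \;=\; \sum_{\gamma \in M(\Gamma)} A(q_{\bar t} \gamma)\, B(\gamma) \;=\; \text{const}_\lambda\bigl(\eta \cdot \Phi_\lambda^{a d(\bar t)}(\eta^*)\bigr),
\]
since both sides extract the $\gamma \mapsto q_{\bar t} \gamma$ diagonal. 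The projection $\text{pr}_{M(\Gamma)}$ is harmless because $\xi$'s $\lambda$-support already lies in $M(\Gamma)$, so only these indices contribute to the pairing. The $\tilde\pi$-weights on the two sides of the identity match because both involve $\tilde\pi^{w_\Gamma(q_{\bar t} \gamma)/q_{\bar t}}$ from the $\c O_{0, q_{\bar t}}$-coefficients produced by $[\alpha_{a d(\bar t)}]_{(\kappa; m)}$ (or its dual after $\Phi_\lambda$) combined with $\tilde\pi^{\pm w_\Gamma(\gamma)}$.

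\textbf{Step 3: Combination.} Expanding $\xi = \sum \xi(\gamma, \b u) \tilde\pi^{w_\Gamma(\gamma)} \lambda^\gamma e_{\b u}$ and $\xi^* = \sum \xi^*(\sigma, \b v) \tilde\pi^{-w_\Gamma(\sigma)} \lambda^{-\sigma} e_{\b v}^*$, each side of (\ref{E: pairing with estimate}) becomes a quadruple sum over $(\gamma, \b u, \sigma, \b v)$. In each summand, the $\lambda$-factors are matched by Step 2 (forcing $\sigma = q_{\bar t} \gamma - \tau$ with $\tau$ coming from the $e$-action), and the $e$-factors are matched by Step 1 (equating $G_{\b u, \b v}(\lambda)\,(e_{\b v}, e_{\b v}^*)_{k_m}$ on the left with $G^*_{\b v, \b u}(\lambda)\,(e_{\b u}, e_{\b u}^*)_{k_m}$ on the right). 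Together these give termwise equality, hence (\ref{E: pairing with estimate}).

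The principal obstacle I anticipate is bookkeeping: correctly tracking the $\tilde\pi^{w_\Gamma}$ and $\tilde\pi^{w_{q_{\bar t}\Gamma}}$ normalizations across $\psi_\lambda^{a d(\bar t)}$, $\Phi_\lambda^{a d(\bar t)}$, and the two versions of $[\alpha_{a d(\bar t)}]$, and checking that they cancel to reconstitute the weights built into $\langle \cdot, \cdot \rangle_{k_m}$. Conceptually, however, the result is formal once Steps 1 and 2 are in place, as the identity reduces cleanly to the already-established finite-symmetric-power adjointness (\ref{E: dual sym}) together with the elementary constant-term identity for $\psi_\lambda$ vs.\ $\Phi_\lambda$.
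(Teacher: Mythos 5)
Your proposal is correct and follows essentially the same route as the paper's proof: the paper likewise transports the finite symmetric power adjointness (\ref{E: dual sym}) into the identity (\ref{E: rewrite duality}) for $[\alpha_{a d(\bar t)}]_{(\kappa;m)}$ and $[\alpha_{a d(\bar t)}^*]_{(\kappa;m)}$, combines it with the constant-term adjointness $\langle \psi_\lambda \xi, \xi^*\rangle_{k_m} = \langle \xi, \Phi_\lambda \xi^*\rangle_{k_m}$, and reduces by linearity to monomials $\lambda^\gamma e_{\b u}$, $\lambda^{-\sigma} e_{\b v}^*$, checking that $\text{pr}_{M(\Gamma)}$ does not affect the paired terms (both sides equal the coefficient $C(q_{\bar t}\sigma-\gamma)$ when $q_{\bar t}\sigma-\gamma \in M(\Gamma)$ and vanish otherwise). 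Your Steps 1--3 are exactly this decomposition, and the weight bookkeeping you flag indeed cancels on the diagonal by the definition of $\langle\cdot,\cdot\rangle_{k_m}$.
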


\begin{proof}
With $\xi \in \c S_0^{(k_m)}(\c O_0)$ and $\xi^* \in \c S_0^{*(k_m)}(\c O_{0, q_{\bar t}})$, we may rewrite (\ref{E: dual sym}) as
\begin{equation}\label{E: rewrite duality}
( [ \alpha_{a d(\bar t)}]_{(\kappa, m)} \xi, \xi^*)_{k_m} = (\xi,  [\alpha_{a d(\bar t)}^*]_{(\kappa; m)} \xi^*)_{k_m}.
\end{equation}
Next, by linearity we only need consider $\xi = \lambda^\gamma e_{\b u}$ and $\xi^* = \lambda^{-\sigma} e_{\b v}^*$ where $\gamma, \sigma \in M(\Gamma)$ and $\b u, \b v \in S(M)$. We may write
\[
(e_{\b u}, [\alpha_{a d(\bar t)}^*]_{(\kappa; m)} e_{\b v}^*)_{k_m} = \sum_{\tau \in M(\Gamma)} C(\tau) \lambda^\tau.
\]
Next, observe that
\[
\langle \psi_\lambda \xi, \xi^* \rangle_{k_m} = \langle \xi, \Phi_\lambda \xi^* \rangle_{k_m}.
\]
Hence, in the case $\xi = \lambda^\gamma e_{\b u}$ and $\xi^* = \lambda^{-\sigma} e_{\b v}^*$,
\begin{align*}
\langle \beta_{(\kappa; m)} \xi, \xi^* \rangle_{k_m} &= \langle [\alpha_{a d(\bar t)}]_{(\kappa; m)} \xi, \Phi_\lambda^{a d(\bar t)} \xi^* \rangle_{k_m} \\
&= \text{constant term of } \left[ \lambda^{\gamma - q_{\bar t} \sigma} ([\alpha_{a d(\bar t)}]_{(\kappa; m)} e_{\b u}, e_{\b v}^*)_{k_m} \right] \\
&= \text{constant of } \left[ \lambda^{\gamma - q_{\bar t} \sigma} ( e_{\b u}, [\alpha_{a d(\bar t)}^*]_{(\kappa; m)} e_{\b v}^*)_{k_m} \right] \qquad \text{ by (\ref{E: rewrite duality}}) \\
&= \text{constant of } \left[ \lambda^{\gamma - q_{\bar t} \sigma} \sum_{\tau \in M(\Gamma)} C(\tau) \lambda^{\tau} \right] \\
&=
\begin{cases}
C(q_{\bar t} \sigma - \gamma) & \text{if } q_{\bar t} \sigma - \gamma \in M(\Gamma) \\
0 & \text{otherwise.}
\end{cases}
\end{align*}
In the other direction, again setting $\xi = \lambda^\gamma e_{\b u}$ and $\xi^* = \lambda^{-\sigma} e_{\b v}^*$,
\begin{align*}
\langle \xi, \beta_{(\kappa; m)}^* \xi^* \rangle_{k_m} &= \text{constant term of }  \left[ \lambda^\gamma \cdot \text{pr}_{M(\Gamma)}\left( \lambda^{-q_{\bar t} \sigma} (e_{\b u}, [\alpha_{a d(\bar t}^*]_{(\kappa; m)} e_{\b v}^*)_{k_m} ) \right) \right] \\
&= \text{constant of } \left[ \lambda^\gamma \cdot \text{pr}_{M(\Gamma)}\left( \sum_{\tau \in M(\Gamma)} C(\tau) \lambda^{-(q_{\bar t} \sigma - \tau)} \right) \right] \\
&= \text{constant of } \left[ \lambda^\gamma \cdot  \sum_{\substack{\tau \in M(\Gamma) \text{ s.t.} \\ q_{\bar t} \sigma - \tau \in M(\Gamma)}} C(\tau) \lambda^{-(q_{\bar t} \sigma - \tau)} \right] \\
&= 
\begin{cases}
C(q_{\bar t} \sigma - \gamma) & \text{if } q_{\bar t} \sigma - \gamma \in M(\Gamma) \\
0 & \text{otherwise.}
\end{cases}
\end{align*}
\end{proof}

Observe that $\beta_{(\kappa; m), \bar t}$ and $\beta_{\kappa, \bar t}$ are completely continuous operators on the $p$-adic Banach $R$-algebra $\c S_0(\c O_0)$ (viewed as $R$-algebra) with orthonormal basis $\{ \tilde \pi^{w_\Gamma(\gamma)} \lambda^\gamma e_{\b u} \mid \gamma \in M(\Gamma), \b u \in S(M) \}$. Let $\c T_0(R) \ \text{be} \ \c S_0(\c O_0)$ viewed in this way as $R$-algebra. Similarly, write $\c T_0^*(R)$ for the $b(I)$-space (over $R$) in Serre's terminology with ``basis'' $I := \{ \tilde \pi^{-w_\Gamma(\gamma)} \lambda^{-\gamma} e_{\b u}^* \mid \gamma \in M(\Gamma), \b u \in S(M) \}$ with coefficients in $R$. Again, $\c T_0^*(R)$ is just $\c S_0^*(\c O_0^*)$ viewed over $R$. Then
\[
\lim_{m \rightarrow \infty} det(1 - \beta_{(\kappa; m), \bar t} T) = det(1 - \beta_{\kappa, \bar t} T).
\]

Similarly, $\beta^*_{(\kappa; m), \bar t}$ is a continuous $R$-linear endomorphism of $\c T_0^*(R)$ to itself. We may consider a matrix $\f B^{*(\kappa; m), \bar t}$ with entries in $R$ defined by
\[
\beta^*_{(\kappa; m), \bar t}( \tilde \pi^{-w_{\Gamma}(\gamma)} \lambda^{-\gamma} e_{\b u}^*) = \sum \f B^{*(\kappa; m), \bar t}_{(\delta, \b v), (\gamma, \b u)} \tilde \pi^{-w_\Gamma(\delta)} \lambda^{-\delta} e_{\b v}^*.
\]
Using the matrix $\f B^{*(\kappa; m), \bar t}$, we define in the usual way the Fredholm determinant $det(1 - \beta_{(\kappa; m), \bar t}^* T) = \sum_{j \geq 0} (-1)^{j+1} C_j(\beta_{(\kappa; m), \bar t}^*) T^j$ where $C_0  = 1$ and $C_j$ is the series of all principal $j \times j$ subdeterminants of the matrix $\f B^{*(\kappa; m), \bar t}$. The $\langle \cdot, \cdot \rangle_{k_m}$-adjointness of $\beta_{(\kappa; m), \bar t}$ and $\beta^*_{(\kappa; m), \bar t}$ implies $C_j(\beta_{(\kappa; m), \bar t}) = C_j(\beta_{(\kappa; m), \bar t}^*)$, so that
\[
det(1 - \beta_{(\kappa; m), \bar t}^* T) = det(1 - \beta_{(\kappa; m), \bar t} T).
\]
The uniform convergence $\lim_{m \rightarrow \infty} \f B^{*(\kappa; m), \bar t}  =: \f B^*_{\kappa, \bar t}$  over the entries implies that the series $\sum_{j \geq 0} (-1)^{j+1} C_j(\f B_{\kappa, \bar t}^*) T^j$ is well-defined, and is the coefficient-wise limit  of $det(1 - \f B^*_{(\kappa; m), \bar t} T)$ as $m \rightarrow \infty$. If we then define
\[
det(1 - \beta^*_{\kappa, \bar t} T) := \sum_{j \geq 0} (-1)^{j+1} C_j(\f B_{\kappa, \bar t}^*) T^j,
\]
then we have shown:

\begin{theorem}
$det(1 - \beta_{\kappa, \bar t} T) = det(1 - \beta_{\kappa, \bar t}^* T)$, and thus from (\ref{E: A}),
\begin{equation}\label{E: L0  and dual}
L^{(0)}(\kappa, \bar t, T)^{(-1)^{s+1}} = det(1 - \beta_{\kappa, \bar t}^* T)^{\delta_{q_{\bar t}}^{s}}.
\end{equation}
\end{theorem}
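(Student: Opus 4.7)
The plan is to deduce the theorem by passing to the limit in the finite-rank approximations $\beta_{(\kappa; m), \bar t}$ and $\beta^*_{(\kappa; m), \bar t}$, where the identity of Fredholm determinants is already essentially established by the adjointness pairing (\ref{E: pairing with estimate}).

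First I would observe that for each fixed $m$, the operators $\beta_{(\kappa; m), \bar t}$ and $\beta^*_{(\kappa; m), \bar t}$ are in $\langle \cdot, \cdot \rangle_{k_m}$-duality on the finite symmetric power pieces $\c S_0^{(k_m)}(\c O_0)$ and $\c S_0^{*(k_m)}(\c O_0^*)$. Because the pairing $\langle \cdot, \cdot \rangle_{k_m}$ is nondegenerate on its orthonormal monomial bases (the diagonal pairings $(e_{\b u}, e_{\b u}^*)_{k_m}$ are nonzero rationals), adjointness forces the matrix of $\beta^*_{(\kappa; m), \bar t}$ to be the transpose (up to the diagonal normalization) of the matrix of $\beta_{(\kappa; m), \bar t}$. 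Consequently, the principal $j \times j$ subdeterminants agree, and hence
\[
\det(1 - \beta_{(\kappa; m), \bar t} T) = \det(1 - \beta^*_{(\kappa; m), \bar t} T)
\]
as formal power series in $T$, an identity already observed in the paragraph preceding the theorem.

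Next I would take the limit $m \to \infty$. On the left, (\ref{E: beta limit}) gives convergence $\beta_{(\kappa; m), \bar t} \to \beta_{\kappa, \bar t}$ in operator norm on $\c T_0(R)$, and since both are completely continuous, the Fredholm determinants converge coefficient-wise:
\[
\lim_{m \to \infty} \det(1 - \beta_{(\kappa; m), \bar t} T) = \det(1 - \beta_{\kappa, \bar t} T).
\]
On the right, the entries $\f B^{*(\kappa; m), \bar t}_{(\delta, \b v), (\gamma, \b u)}$ converge uniformly in the indices to the entries of $\f B^*_{\kappa, \bar t}$ (by the corresponding norm-convergence $[\alpha^*_{a d(\bar t)}]_{(\kappa; m)} \to [\alpha^*_{a d(\bar t)}]_\kappa$), which justifies the coefficient-wise definition
\[
\det(1 - \beta^*_{\kappa, \bar t} T) = \lim_{m \to \infty} \det(1 - \beta^*_{(\kappa; m), \bar t} T).
\]
Combining these three equalities yields $\det(1 - \beta_{\kappa, \bar t} T) = \det(1 - \beta^*_{\kappa, \bar t} T)$. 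The second assertion in the theorem then follows by substituting this into (\ref{E: A}).

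The only subtle point — and really the main technical input — is ensuring that the coefficient-wise convergence on the dual side actually defines a legitimate $p$-adic entire series, despite $\beta^*_{\kappa, \bar t}$ not itself being completely continuous on $\c T_0^*(R)$ in any standard sense (coefficients of dual elements are not required to tend to zero). This is handled by working entirely with the matrix $\f B^*_{\kappa, \bar t}$ and defining $\det(1 - \beta^*_{\kappa, \bar t} T)$ via the series of principal subdeterminants, and checking that the estimates on $[\alpha^*_{a d(\bar t)}]_\kappa$ from (\ref{E: number14}) together with the $(1 - 1/q_{\bar t})$-gain exponent derived in the proof of Lemma \ref{L: beta is endo} give uniform bounds on the matrix entries sufficient to make the sum defining each $C_j(\f B^*_{\kappa, \bar t})$ converge in $R$.
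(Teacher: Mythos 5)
Your proposal is correct and follows essentially the same route as the paper: equality of $C_j$'s at each finite level $k_m$ via the $\langle \cdot, \cdot \rangle_{k_m}$-adjointness of $\beta_{(\kappa; m), \bar t}$ and $\beta^*_{(\kappa; m), \bar t}$, then passage to the limit using (\ref{E: beta limit}) on the primal side and uniform entry-wise convergence of $\f B^{*(\kappa; m), \bar t}$ to $\f B^*_{\kappa, \bar t}$ on the dual side, with $det(1 - \beta^*_{\kappa, \bar t} T)$ defined through the series of principal subdeterminants. Your closing remark about why the dual determinant must be defined matrix-wise (since $\beta^*_{\kappa, \bar t}$ is not completely continuous on $\c T_0^*(R)$) is exactly the point the paper handles the same way.
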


\section{Eigenvector}

Recall that $G(t, \lambda, x) = f(t, x) + P(\lambda, x) = \sum t_u x^u + \sum A(\gamma, v) \lambda^\gamma x^v \in \bb F_q[x_1^\pm, \ldots, x_n^\pm, \lambda_1^\pm, \ldots, \lambda_s^\pm, \{ t_u \}_{u \in Supp(f)}]$. Let  $\hat A(\gamma, v)$ be the Teichm\"uller lift in $\bb Q_q$ for each $(\gamma, v) \in Supp(P)$, and denote the lifting of $G$ by $\hat G(t, \lambda, x) := \hat f(t, x) + \hat P(\lambda, x) = \sum t_u x^u + \sum \hat A(\gamma, v) \lambda^\gamma x^v \in \bb Q_q[x_1^\pm, \ldots, x_n^\pm, \lambda_1^\pm, \ldots, \lambda_s^\pm, \{ t_u \}_{u \in Supp(f)}]$. We now replace every coefficient of $G$ (w.r.t the variables $x$ and $\lambda$) with a new variable $\Lambda$: set $ \b f(\Lambda,x)=\sum_{u \in Supp(f)} \Lambda_u x^u \ \text{and} \ \c P(\Lambda, \lambda. x) = \sum_{(\gamma, v) \in Supp(P)} \Lambda_{\gamma, v} \lambda^\gamma x^v$ and
\[
H(\Lambda, \lambda, x) := \b f(\Lambda,x)  +  \c P(\Lambda, \lambda. x) .
\]
As before, let $\Delta_{\infty}(H)$ denote the Newton polytope at infinity in $\bb R^{s+n}$ of $H$ (in $\lambda$ and $x$ variables). Let $Cone(H)$ be the cone in $\bb R^{s+n}$ over $\Delta_{\infty}(H)$ and $M(H) = Cone(H) \cap \bb Z^{s+n}$ be the relevant monoid. Clearly $M(H) \subset M(\Gamma) \times M(f)$. By our hypothesis that the $x$-support of $P$ is contained in $\Delta_{\infty}(f)$ we have the polyhedral weight function on this polytope $w_H$ dominates the total weight $w_{\text{tot}} := w_{\Gamma} + w_f$ relative to the polyhedron $\Gamma \times \Delta_{\infty}$; more precisely 
\[
w_{\text{tot}}(\gamma, u) \leq w_H(\gamma, u)
\]
for all $(\gamma, u) \in M(H)$.

Now recall as well the projection map defined earlier,
\[
\text{pr}_{M(f)}: \sum_{u \in \bb Z^n} C(u) x^{-u} \longmapsto \sum_{u \in M(f)} C(u) x^{-u}.
\]
We define $\c K := R[[\Lambda]]$ and 
\[
\c K_0 := \{ \xi \in \sum_{v \in \bb Z_{\geq 0}^t} \xi_v \Lambda^v \in \c K \mid \xi_v \rightarrow 0 \text{ as } v \rightarrow \infty \},
\]
where $t$ is the cardinality of $\{ Supp(f) \} \cup \{ Supp(P) \}$. 
We endow both spaces with the sup norm on coefficients.

Define the spaces 
\begin{align*}
\c W(\c K_0) &:= \{ \sum_{\gamma \in M(\Gamma)} \xi_\gamma(\Lambda) \tilde \pi^{-w_\Gamma(\gamma)} \lambda^{-\gamma} \mid \xi_\gamma(\Lambda) \in \c K_0 \} \\
\c W_0(\c K_0) &:= \{ \sum_{\gamma \in M(\Gamma)} \xi_\gamma(\Lambda) \tilde \pi^{-w_\Gamma(\gamma)} \lambda^{-\gamma} \mid \xi_\gamma(\Lambda) \in \c K_0, \xi_\gamma(\Lambda) \rightarrow 0 \text{ as } \gamma \rightarrow \infty \}.  
\end{align*}
Similarly, we define as well
\begin{align*}
\c D( \c W(\c K_0)) &:= \{ \sum_{(\gamma, u) \in M(\Gamma) \times M(f) } \xi_{\gamma, u}(\Lambda) \tilde \pi^{-w_\Gamma(\gamma) - w(u)} \lambda^{-\gamma} x^{-u} \mid \xi_{\gamma, u}(\Lambda) \in \c K_0 \} \\
\c D_0( \c W_0( \c K_0) ) &:= \{ \sum_{(\gamma, u) \in M(\Gamma) \times M(f) } \xi_{\gamma, u}(\Lambda) \tilde \pi^{-w_\Gamma(\gamma) - w(u)} \lambda^{-\gamma} x^{-u}  \mid \xi_{\gamma, u}(\Lambda) \in \c K_0,  \sup_{\gamma} | \xi_{\gamma, u}(\Lambda)| \rightarrow 0 \text{ as } u \rightarrow \infty \}.
\end{align*}
We proceed similar to our work above. We define a $\c K_0$-module
\[
\c S^*_0(\c W(\c K_0)) := \{ \sum_{\gamma \in M(\Gamma), \b u \in S(M)} A_{\gamma, \b u}(\Lambda) \tilde \pi^{-w_\Gamma(\gamma)} \lambda^{-\gamma} e_{\b u}^* \mid A_{\gamma, \b u} \in \c K_0 \}.
\]
and a $\c W_0(\c K_0)$-algebra
\[
\c S^*_0(\c W_0(\c K_0)) := \{ \sum_{\gamma \in M(\Gamma), \b u \in S(M)} \xi^*_{ \b u}(\Lambda, \lambda) e_{\b u}^* \mid \xi^*_{ \b u}(\Lambda, \lambda) \in \c W_0(K_0) \}.
\]
As before define an embedding $\Upsilon: \c D(\c W(\c K_0))) \hookrightarrow \c S^*_0(\c W(\c K_0))$ by $\tilde \pi^{-w(u)} x^{-u} \mapsto e_u^*$ for $u \in M \setminus \{0\}$ and $\Upsilon(1) := 1$. We define a relative Frobenius map as follows. First, set
\begin{align*}
F(\Lambda, \lambda, x) &:= \prod_{u \in Supp(f)} \theta(\Lambda_u x^u) \cdot \prod_{(\gamma, v) \in Supp(P)} \theta( \Lambda_{\gamma, v} \lambda^\gamma x^v) \\
F_m(\Lambda, \lambda, x) &:= \prod_{i=0}^{m-1} F(\Lambda^{p^i}, \lambda^{p^i}, x^{p^i}),
\end{align*}
and note that, similar to before,
\[
F_m(\Lambda, \lambda, x) = \sum_{(\gamma, u) \in M(H)} \c F^m_{\gamma, u}(\Lambda) \lambda^{\gamma} x^u 
\]
with $\c F^m_{\gamma, u}(\Lambda)= B_{\gamma,u}(\Lambda) \tilde{\pi}^{w_{tot}(\gamma,u)/p^{m-1}} = \b B_{\gamma,u}(\Lambda) \tilde{\pi}^{w_H(\gamma,u)/p^{m-1}}$.
It follows that, if we set (as before)
\[
\alpha_m^*(\Lambda, \lambda) := \text{pr}_{M(f)} \circ F_m(\Lambda, \lambda, x) \circ \Phi_x^m,
\]
where $\Phi_x$ sends $x^u \mapsto x^{pu}$ and $\text{pr}_{M(f)}$ was defined above, then an argument similar to Lemma \ref{L: better conv} shows
\[
\alpha_{m, \Lambda}^*: \c D_0(\c W_{0, p^m}(\c K_0)) \rightarrow \c D_0(\c W_{0, p^m}(\c K_0)),
\]
where $\c W_{0, p^m}$ is defined by replacing $\tilde \pi^{w_\Gamma}$ with $\tilde \pi^{w_{p^m \Gamma}}$ in the definition of $\c W_0$. Furthermore, 
\[
\alpha_{m, \Lambda}^*(\tilde \pi^{-w(v)} x^{-v}) = \sum_{u \in M(f)} C_{u, v}(\Lambda, \gamma) \tilde \pi^{-w(u)} x^{-u}
\]
with $C_{u, v}(\Lambda, \lambda) \in \c W_{0, p^m}(\c K_0)$ and $C_{u, v}(\Lambda, \lambda) \rightarrow 0$ as $w(u) \rightarrow \infty$. For any $\kappa \in \bb{Z}_p$, we define $[\alpha^*_m]_{\kappa} : \c S^*_{0,p^m}(\c W_0(\c K_0)) \rightarrow \c S^*_{0,p^m}(\c W_0(\c K_0))$ using (\ref{E: number14}). By an argument similar to Lemma \ref{L: beta is endo}, the map
\[
\beta_{\kappa, \bar t, \Lambda}^*: \c S^*_0(\c W(\c K_0)) \rightarrow \c S^*_0(\c W(\c K_0))
\]
defined by 
\[
\beta_{\kappa, \bar t, \Lambda}^* := \text{pr}_{M(\Gamma)} \circ [ \alpha_{a d(\bar t)}^* ]_\kappa  \circ \Phi_\lambda^{a d(\bar t)}.
\]
is an endomorphism over $\c K_0$.

\bigskip\noindent{\bf Eigenvector.} Set $M_0(\Gamma) = M(\Gamma) \cap (- M(\Gamma)), M_0(f) = M(f) \cap (- M(f)), \ \text{and} \ M_0(H) = M(H) \cap (-M(H))$. Define the projection map 
\[
\text{pr}_0: \quad  \sum_{\gamma \in \bb Z^s, u \in \bb Z^n} C(\gamma, u) \lambda^\gamma x^u \longmapsto \sum_{\gamma \in M_0(\Gamma), u \in M_0(f)} C(\gamma, u) \lambda^\gamma x^u.
\]
If we write $\exp \pi H(\Lambda, \lambda, x) = \sum A_{v, \gamma, u} \Lambda^v \lambda^\gamma x^u$ then clearly $A_{v, \gamma, u} \in R$. Let us write then $\exp \pi H(\Lambda, \lambda, x) = \sum A_{\gamma, u}(\Lambda) \lambda^\gamma x^u$ with $A_{\gamma, u}(\Lambda) \in R[[\Lambda]]$ and the indices $(\gamma, u) \in M(H) \subset M(\Gamma) \times M(f)$. We will also write 
\[
pr_0( \exp \pi H(\Lambda, \lambda, x) ) = \sum_{(\gamma, u) \in M_0(\Gamma) \times M_0(f)} J_{\gamma, u}(\Lambda) \tilde \pi^{-w_\Gamma(\gamma) -w(u)} \lambda^{-\gamma} x^{-u} =  \sum_{(\gamma, u) \in M_0(H)} \tilde{J}_{\gamma, u}(\Lambda) \tilde \pi^{-w_H(\gamma)} \lambda^{-\gamma} x^{-u}.
\]
The running indices $(\gamma, u)$ in all these sums may be taken in $M_0(H).$ Of course, 
\begin{equation}\label{E: J's}
J_{\gamma, u}(\Lambda) = A_{-\gamma, -u}(\Lambda) \tilde \pi^{w_\Gamma(\gamma) + w(u)} = \tilde{J}_{\gamma,u} \tilde{\pi}^{ w_{\Gamma}(\gamma)+w(u) -w_H(\gamma,u) }
\end{equation}
for every $(\gamma, u) \in M_0(H)$, and $J_{0,0} = \tilde{J}_{0,0} = A_{0,0}  \in 1 + \Lambda \c K$.  That is, $J_{0,0}(\Lambda)$ is a power series in the variables $ \Lambda$ with coefficients in $R$ and constant term 1. So $J_{0,0}(\Lambda)$ is a unit in $\c K$. Define 
\begin{align}\label{E: eigenvector}
\eta(\Lambda, \lambda, x) :&= \frac{1}{ J_{00}(\Lambda) } \text{pr}_0 \exp \pi H(\Lambda, \lambda, x) \notag \\
&= 1 + \sum_{\substack{ (\gamma, u) \in M_0(H)  \\  (\gamma, u) \not= (0,0)}} \frac{\tilde{J}_{\gamma, u}(\Lambda)}{J_{0,0}(\Lambda)} \tilde \pi^{-w_H(\gamma,u)} \lambda^{-\gamma} x^{-u}. 
\end{align}
In \cite{MR2966711}, it was shown that $J_{0,0}(\Lambda) / J_{0,0}(\Lambda^p)$ and $\tilde{J}_{\gamma, u}(\Lambda) / J_{0,0}(\Lambda)$ converge on the closed unit polydisk $| \Lambda | \leq 1$ for every $\Lambda$. Equivalently, $J_{0,0}(\Lambda) / J_{0,0}(\Lambda^p)$ and $ \tilde{J}_{\gamma, u}(\Lambda) / J_{0,0}(\Lambda)$ belong to $\c K_0$. The same holds as well for $ J_{\gamma, u}(\Lambda) / J_{0,0}(\Lambda)$ using (\ref{E: J's}), since $A_{-\gamma,-u}(\Lambda) \in R[[\Lambda]]$.

Since $J_{\gamma, u}(\Lambda) = A_{-\gamma, -u} \tilde \pi^{w_\Gamma(\gamma) + w(u)}$, we have
\[
\left| \sum_{\gamma \in M_0(\Gamma)} \frac{J_{\gamma, u}(\Lambda)}{ J_{0,0}(\Lambda)} \tilde \pi^{-w_\Gamma(\gamma)} \lambda^{-\gamma} \right| \leq |\tilde \pi |^{w(u)},
\]
and so $\eta(\Lambda, \lambda, x) \in \c D_0(\c W_0(\c K_0)) \subset \c D( \c W(\c K_0))$.

Set $\c F(\Lambda) := J_{00}(\Lambda) / J_{00}(\Lambda^p)$. Observe that 
 \[\text{pr}_{M(f)} \circ F_m(\Lambda,\lambda,x) \circ \text{pr}_0 (\exp \pi H(\Lambda^p,\lambda^p,x^p)) = \text{pr}_{M(f)}  (\exp \pi H(\Lambda,\lambda,x))
\]
so that
\begin{align*}
\alpha_{1, \Lambda}^*( \eta(\Lambda^p, \lambda^p, x) ) &= \c F(\Lambda) \text{pr}_{M(f)} \left( \frac{\exp \pi H(\Lambda, \lambda, x)}{J_{00}(\Lambda)} \right) \\
&= \c F(\Lambda) \left( \eta(\Lambda, \lambda, x)  + \tilde \omega (\Lambda, \lambda, x) \right),
\end{align*}
where each $\lambda^\gamma$ appearing in $\tilde \omega$ lies in $M(\Gamma) \setminus M_0(\Gamma)$.

Iterating this, if we set
\[
\c F_m(\Lambda) := \prod_{i=0}^{m-1} \c F(\Lambda^{p^i}),
\]
then we have
\begin{equation}\label{E: first formula}
\alpha_{a d(\bar t), \Lambda}^* \eta(\Lambda^{q_{\bar t}}, \lambda^{q_{\bar t}}, x) = \c F_{a d(\bar t)}(\Lambda) \left( \eta(\Lambda, \lambda, x) + \omega(\Lambda, \lambda, x) \right),
\end{equation}
where each $\lambda^\gamma$ appearing in $\omega$ lies in $M(\Gamma) \setminus M_0(\Gamma)$. 

For notational convenience, set $Q_{\gamma, u} := Q_{\gamma, u}(\Lambda) := J_{\gamma, u}(\Lambda) / J_{0,0}(\Lambda)$ so that
\[
\eta(\Lambda, \lambda, x) = 1 + \sum_{\substack{\gamma \in M_0(\Gamma), u \in M_0(f) \\ (\gamma, u) \not= (0, 0)}} Q_{\gamma, u} \tilde \pi^{-w_\Gamma(\gamma) - w(u)} \lambda^{-\gamma}  x^{-u}.
\]
Next, write
\[
\Upsilon(\eta) = 1 + \sum_{\substack{\gamma \in M_0(\Gamma), u \in M_0(f) \\ (\gamma, u) \not= (0, 0)}} Q_{\gamma, u} \tilde \pi^{-w_\Gamma(\gamma)} \lambda^{-\gamma} e_u^*,
\]
and observe that $\Upsilon(\eta)$ and $1 / \Upsilon(\eta)$ are elements of $\c S_0^*(\c W_0(\c K_0))$. For $\kappa \in \bb Z_p$, we compute
\begin{align*}
(\Upsilon(\eta))^\kappa &= \sum_{l=0}^\infty \binom{\kappa}{l}(\sum Q_{\gamma, u} \tilde \pi^{-w_\Gamma(\gamma)} \lambda^{-\gamma} e_u^*)^l \\
&= \sum_{l=0}^\kappa \binom{\kappa}{l} \sum_{\substack{\gamma_1, \ldots, \gamma_l \in M_0(\Gamma) \\ u_1, \ldots, u_l \in M_0(f) \\ (\gamma_j, u_j) \not= (0, 0) \text{ for every j}} } Q_{\gamma_1, u_1} \cdots Q_{\gamma_l, u_l} \tilde \pi^{-w_\Gamma(\gamma_1) - \cdots - w_\Gamma(\gamma_l)} \lambda^{-(\gamma_1 + \cdots + \gamma_l)} e_{u_1}^* \cdots e_{u_l}^* \\
&= \sum_{l = 0}^\infty \binom{\kappa}{l} \sum_{\substack{\gamma_1, \ldots, \gamma_l \in M_0(\Gamma) \\ u_1, \ldots, u_l \in M_0(f) \\ (\gamma_j, u_j) \not= (0, 0) \text{ for every j}} } \widetilde{Q}_{\gamma, u} \cdot \tilde \pi^{-w_\Gamma(\gamma_1 + \cdots + \gamma_l)} \lambda^{-(\gamma_1 + \cdots + \gamma_l)} e_{u_1}^* \cdots e_{u_l}^*
\end{align*}
where
\[
\widetilde{Q}_{\gamma, u} := Q_{\gamma_1, u_1} \cdots Q_{\gamma_l, u_l} \tilde \pi^{-w_\Gamma(\gamma_1) - \cdots - w_\Gamma(\gamma_l) + w_\Gamma(\gamma_1 + \cdots + \gamma_l)}.
\]
Hence, $(\Upsilon(\eta))^\kappa \in \c S^*(\c W_0(\c K_0))$.  As every $\lambda^\gamma$ appearing in $\Upsilon(\omega)$ (from equation (\ref{E: first formula})) satisfies $\gamma \in M(\Gamma) \setminus M_0(\Gamma)$, it follows that the same is true for $( \Upsilon(\omega) / \Upsilon(\eta) )^r$ for any $r \in \bb Z_{\geq 1}$. Hence,
\[
\text{pr}_{M(\Gamma)} \left(1 + \frac{\Upsilon(\omega)}{\Upsilon(\eta) } \right)^\kappa  = 1.
\]

\bigskip\noindent{\bf Unit root formula.} We may now finish the proof of Theorem \ref{T: Main Thm}. For convenience, write $\eta(\Lambda, \lambda, x) = 1 + h(\Lambda, \lambda, x)$ so that $\Upsilon(\eta)^\kappa = (1 + \Upsilon(h))^\kappa = \sum_{l=0}^\infty  \binom{\kappa}{l} \Upsilon(h)^l$. Observe that 
\begin{align*}
\beta_{\kappa, \bar t, \Lambda}^* \Upsilon(\eta(\Lambda^{q_{\bar t}}, \lambda, x))^\kappa &= \text{pr}_{M(\Gamma)} \circ [\alpha_{a d(\bar t), \Lambda}^*]_\kappa \circ \Phi_\lambda^{a d(\bar t)} \Upsilon(\eta(\Lambda^{q_{\bar t}}, \lambda, x))^\kappa \\
&= \text{pr}_{M(\Gamma)} \circ [\alpha_{a d(\bar t), \Lambda}^*]_\kappa \Upsilon(\eta(\Lambda^{q_{\bar t}}, \lambda^{q_{\bar t}}, x))^\kappa \\
&= \text{pr}_{M(\Gamma)} \circ [\alpha_{a d(\bar t), \Lambda}^*]_\kappa  \sum_{l=0}^\infty \binom{\kappa}{l} \Upsilon \left( h(\Lambda^{q_{\bar t}}, \lambda^{q_{\bar t}}, x)  \right)^l \\
&= \text{pr}_{M(\Gamma)} \sum_{l=0}^\infty \binom{\kappa}{l} \left(\Upsilon \circ \alpha_{a d(\bar t), \Lambda}^* \cdot 1 \right)^{\kappa - l} \left(\Upsilon \circ  \alpha_{a d(\bar t), \Lambda}^* h(\Lambda^{q_{\bar t}}, \lambda^{q_{\bar t}}, x) \right)^l \qquad \text{by definition of $ [\alpha_{a d(\bar t), \Lambda}^*]_\kappa$} \\
&= \text{pr}_{M(\Gamma)} \> \left( \Upsilon \circ  \alpha_{a d(\bar t), \Lambda}^* \cdot 1 + \Upsilon \circ  \alpha_{a d(\bar t), \Lambda}^* h(\Lambda^{q_{\bar t}}, \lambda^{q_{\bar t}}, x) \right)^\kappa \\
&= \text{pr}_{M(\Gamma)} \>  \left( \Upsilon \circ  \alpha_{a d(\bar t), \Lambda}^* \eta(\Lambda^{q_{\bar t}}, \lambda^{q_{\bar t}}, x) \right)^\kappa \\
&= \text{pr}_{M(\Gamma)} \>  \c F_{a d(\bar t)}(\Lambda)^\kappa \left( \Upsilon(\eta(\Lambda, \lambda, x) + \Upsilon(\omega(\Lambda, \lambda, x)  \right)^\kappa \qquad \text{by } (\ref{E: first formula}) \\
&= \text{pr}_{M(\Gamma)} \>  \c F_{a d(\bar t)}(\Lambda)^\kappa \Upsilon(\eta(\Lambda, \lambda, x))^\kappa \left( 1 + \frac{\Upsilon(\omega(\Lambda, \lambda, x))}{\Upsilon( \eta(\Lambda, \lambda, x))}  \right)^\kappa \\
&= \c F_{a d(\bar t)}(\Lambda)^\kappa \Upsilon(\eta(\Lambda, \lambda, x))^\kappa.
\end{align*}
Finally, we may specialize this equality taking  $\Lambda$ at the Teichm\"uller unit coefficients of $\hat G(\hat t, \lambda, x)$:
\[
\Lambda_u = \hat t_u \qquad \text{and} \qquad \Lambda_{\gamma, v} = \hat A(\gamma, v) \qquad \text{for all $u$ and $\gamma, v$ in the support of $H$},
\] 
and setting
\[
\eta_{\text{sp}}(\lambda, x) := \left( \eta(\Lambda, \lambda, x) \text{ specialized at $\Lambda_u = \hat t_u$ and $\Lambda_{\gamma, v} = \hat A(\gamma, v)$} \right),
\]
then we see that
\begin{equation}\label{E: unit root dual}
\beta_{\kappa, \bar t}^* \Upsilon(\eta_{\text{sp}}(\lambda, x))^\kappa = \c F_{a d(\bar t)}(\hat t)^\kappa \Upsilon(\eta_{\text{sp}}(\lambda, x))^\kappa 
\end{equation}
This demonstrates that $\c F_{a d(\bar t)}(\hat t)^\kappa$ is the unique unit root of $L^{(0)}(\kappa, \bar t, T)^{(-1)^{s+1}}$ by (\ref{E: L0  and dual}), which, together with Theorem \ref{T: unique unit}, completes the proof of Theorem \ref{T: Main Thm}.

\bibliographystyle{amsplain}
\bibliography{References.bib}

\end{document}